\renewcommand\thesection{\Roman{section}} % Roman numerals for the sections
\titleformat{\section}[block]{\Large\bfseries\filright}{\thesection.}{1em}{} % Change the look of the section titles
\titleformat{\subsection}[hang]{\bfseries\filright}{\thesubsection}{1em}{} % Change the look of the section titles
\newtheorem{theorem}{Theorem}
\newtheorem{lemma}{Lemma}
\newtheorem{proposition}{Proposition}
\theoremstyle{definition}
\newtheorem{assumpt}{Assumption}
\newcommand{\given}{\,|\,}
\newcommand{\T}{\top}
\newenvironment{algo-fig}[2]{
	\begin{figure}[t!]
		$\textbf{#1}(#2)$
		\vspace*{2pt}
		\hrule
	}{
	\end{figure}
}
\newenvironment{algo-args}{
	\vspace*{-2pt}
	\begin{itemize}\setlength{\itemsep}{-2pt}
	}{
	\end{itemize}\setlength{\itemsep}{0pt}
}
\newenvironment{algo-algo}{
	\vspace*{-4pt}
	\textit{Algorithm}:\vspace*{-4pt}
	\begin{enumerate}\setlength{\itemsep}{-2pt}
	}{
	\end{enumerate}\setlength{\itemsep}{0pt}
}
\newenvironment{algo-return}{
	\vspace*{-8pt}
	\textit{Return}:
	\vspace*{-4pt}
	\begin{itemize}\setlength{\itemsep}{-2pt}
	}{
	\end{itemize}\setlength{\itemsep}{0pt}
	\hrule
}
\title{\vspace{-15mm}\fontsize{24pt}{10pt}\selectfont\textbf{
Fixed-Domain Asymptotics Under Vecchia's Approximation of Spatial Process Likelihoods}} % Article title
\author{
\large
{\textsc{Lu Zhang}}\\%[2mm]
{\normalsize Division of Biostatistics, Department of Population and Public Health Sciences, University of Southern California}\\%[2mm]
{\normalsize \href{mailto:lzhang63@usc.edu}{lzhang63@usc.edu}}\\%[2mm] 
\\
\large
{\textsc{Wenpin Tang}\thanks{The first and second authors have equal contributions to this paper.}}\\%[2mm]
{\normalsize Department of Industrial Engineering and Operations Research, Columbia University}\\%[2mm]
{\normalsize \href{mailto:wt2319@columbia.edu}{wt2319@columbia.edu}}%[2mm] % Your email address
\\
\\
\large
{\textsc{Sudipto Banerjee}}\\%[2mm]
{\normalsize Department of Biostatistics, University of California, Los Angeles}\\%[2mm]
{\normalsize \href{mailto:sudipto@ucla.edu}{sudipto@ucla.edu}}\\%[2mm]
}
\providecommand{\keywords}[1]{\textbf{\textit{Keywords:}} #1}
\begin{document}
\maketitle % Insert title
\thispagestyle{fancy} % All pages have headers and footers

\label{firstpage}

\begin{abstract}
	Statistical modeling for massive spatial data sets has generated a substantial literature on scalable spatial processes based upon 
	Vecchia's approximation.
	%\textcolor{blue}{Vecchia's approximation.}
	%a likelihood approximation due to \cite{ve88}. 
	Vecchia's approximation for Gaussian process models enables fast evaluation of the likelihood by restricting dependencies at a location to its neighbors. We establish inferential properties of microergodic spatial covariance parameters within the paradigm of fixed-domain asymptotics when they are estimated using Vecchia's approximation. The conditions required to formally establish these properties are explored, theoretically and empirically, and the effectiveness of Vecchia's approximation is further corroborated from the standpoint of fixed-domain asymptotics. %These explorations suggest some practical diagnostics for evaluating the quality of the approximation.
\end{abstract}

\keywords{Fixed-domain asymptotics; Gaussian processes; Likelihood approximations; Mat\'ern covariance function; Microergodic parameter estimation; Spatial statistics}
\newpage
\section{Introduction}
Geostatististical data are often modeled by treating observations as partial realizations of a spatial random field. We customarily model the random field $\{Y(s) : s \in {\cal D} \}$ over a bounded region $ {\cal D} \in \mathbb{R}^d$ as a Gaussian process (GP), denoted $Y(s)\sim GP(\mu_{\beta}(s),K_{\theta}(\cdot,\cdot))$, with mean $\mu_{\beta}(s)$ and covariance function $K_{\theta}(s,s') = \mbox{cov}(y(s_i),y(s_j))$. The probability law for a finite set $\chi = \{s_1,s_2,\ldots,s_n\}$ is given by $y\sim N(\mu_{\beta}, K_{\theta})$, where $y=(y(s_i))$ and $\mu_{\beta}=(\mu_{\beta}(s_i))$ are $n\times 1$ vectors with elements $y(s_i)$ and $\mu_{\beta}(s_i)$, respectively, and $K_{\theta} = (K_{\theta}(s_i,s_j))$ is the $n\times n$ spatial covariance matrix whose $(i,j)$th element is the value of the covariance function $K_{\theta}(s_i,s_j)$. We consider the widely employed stationary Mat\'ern covariance function \citep{Matern86, Stein99} given by
\begin{equation}\label{eq:matern}
	K_{\theta}(s,s') := \frac{\sigma^2 (\phi \|h\|)^{\nu}}{\Gamma(\nu) 2^{\nu - 1}} {\cal K}_{\nu}(\phi \|h\|), \quad \|h\| \ge 0\;,
\end{equation}
where $h=s-s'$, $\sigma^2 > 0$ is called the {\em partial sill} or spatial variance, $\phi > 0$ is the {scale or decay parameter}, $\nu > 0$ is a smoothness parameter, $\Gamma(\cdot)$ is the Gamma function, ${\cal K}_{\nu}(\cdot)$ is the modified Bessel function of order $\nu$ \cite[Section 10]{AS65} and $\theta = \{\sigma^2,\phi,\nu\}$. 
%\begin{comment}
The spectral density corresponding to (\ref{eq:matern}), which we will need later, is 
\begin{equation}\label{eq:spden}
	f(u) = C \frac{\sigma^2 \phi^{2 \nu}}{(\phi^2 + u^2)^{\nu + \frac{d}{2}}} \quad \mbox{for some } C>0.
\end{equation}
%\end{comment}

Likelihood-based inference for $\theta$ will require matrix computations in the order of $\sim n^3$ floating point operations (flops) and can become impracticable when the number of spatial locations, $n$, is very large. Writing $Y_n = (y_1,y_2,\ldots,y_n)^\T$, where $y_i := y(s_i)$ for $i=1,2,\ldots,n$ are the $n$ sampled measurements, we write the joint density $p(Y_n\given \theta) := N(Y_n;\; \mu_{\beta}, K_{\theta})$ as
\begin{equation}\label{eq: full_likelihood}
	p(Y_n \given \theta) = p(y_1 ; \theta) \prod_{i = 2}^n p(y_i \given y_{(i - 1)}; \theta)\;,
\end{equation}
where $y_{(i)} = (y_1, \ldots y_{i})$. \cite{ve88} suggested a simple approximation to (\ref{eq: full_likelihood}) based upon the notion that it may not be critical to use all components of $y_{(i - 1)}$ in $p(y_i \given y_{(i - 1)} ;\theta)$. Instead, the joint density $p(Y_n\given \theta)$ in \eqref{eq: full_likelihood} is approximated by 
\begin{equation}\label{eq: vecchia_approx}
	\tilde{p}(Y_n\given \theta) = p(y_1 \given \theta) \prod_{i = 2}^n p(y_i \given S_{(i - 1)}; \theta)\;,
\end{equation}
where $S_{(i)}$ is a subvector of $y_{(i)}$ for $i = 1, \ldots, n$. The density $\tilde{p}(Y_n\given \theta)$ in \eqref{eq: vecchia_approx} is called Vecchia's approximation and can be regarded as a quasi- or composite likelihood \citep{Zhang12, eidsvikshaby2014, BL20}. Vecchia's approximation has attracted a remarkable amount of attention in recent times, already too vast to be comprehensively reviewed here \citep[see, e.g.,][]{stein2004approximating, datta16a, datta16b, guinness2018permutation, kf2020vecch, katzfuss2021, pbf2022}. Algorithmic developments in Bayesian and frequentist settings \cite[][]{finley2019efficient, zdb2019, kf2020vecch} have enabled scalability to massive data sets (with $n \sim 10^7$ locations) and (\ref{eq: vecchia_approx}) lies at the core of several methods that tackle ``big data'' problems in geospatial analysis \citep{sun2012geostatistics, banerjee2017high, heaton2019case}. 

The Vecchia approximation has recently garnered substantial attention in the spatial statistics literature as an edifice for building massively scalable Gaussian process models. While substantial methodological innovation has been generated by this approach, developing a theoretical understanding regarding the inference and identifiability of the spatial process parameters has remained largely unaddressed. This is because the Vecchia approximation distorts the stationarity of the parent process and, hence, the theoretical tractability of the spatial processes are lost. Our current approach is an original first attempt based upon \cite{Zhang12} to formally introduce methods that can study the asymptotic properties of inference from Vecchia's approximation. While a completely rigorous development is available only in the one-dimensional setting, we emphasize that the approach we develop is novel and should generate subsequent theoretical research in two-dimensions. Therefore, we limit the formal theory to one-dimension but present some insightful numerical experiments in two-dimensions to show that the inferential behavior secured over the real line will be expected to carry over to spatial domains.

%It will, therefore, be of interest to formally establish the inferential properties of the covariance parameters in \eqref{eq:matern}. 
Following the fixed-domain (infill) asymptotic paradigm for spatial inference \citep{Steinbook, zhang2005towards} we discuss inferential properties for the parameters in (\ref{eq:matern}). In this setting, \cite{Zhang04} showed that not all parameters in $\theta$ admit consistent maximum likelihood estimators from the full Gaussian likelihood in (\ref{eq: full_likelihood}) constructed with a stationary Mat\'ern covariance function, but certain \textit{microergodic} parameters are consistently estimated. \citet[Theorem~5]{DZM09} formally established the asymptotic distributions of these microergodic parameters. \cite{KS13} addressed jointly estimating the %\textcolor{blue}
{decay} and the variance parameters in the Mat\'ern family and the effect of a prefixed %\textcolor{blue}
{decay} on inference when having relatively small sample size. All of the aforementioned work has been undertaken using (\ref{eq: full_likelihood}). Here, we formally establish the inferential properties for the estimates of microergodic parameters obtained from Vecchia's approximate likelihood in (\ref{eq: vecchia_approx}). We build our work on a brief but insightful discussion in Section~10.5.3 of \cite{Zhang12}, regarding the inferential behaviour arising from (\ref{eq: vecchia_approx}). To the best of our knowledge such explorations have not hitherto been formally undertaken. Following the aforementioned works in spatial asymptotics, we will restrict attention to the infill or fixed domain setting and %\textcolor{blue}
{focus on the inferential properties of the microergodic parameters for any given value of the smoothness parameter. More specifically, we explore the criteria for asymptotic normality for the maximum likelihood estimates of the microergodic parameters obtained from Vecchia's approximation. In this regard, our work follows the paradigm laid out in \cite{Zhang12} in that we can no longer assume that the conditioning set is bounded. %\textcolor{blue}
	{We provide conditions under which the inference under the Vecchia approximations of the Mat\'ern process will be asymptotically equivalent to the full model.} This distinguishes our intended contribution from that in \cite{BL20}, where bounded conditional sets are exploited to establish consistency results for some selected values of the smoothness parameter. On the other hand, we show that a different set of conditions can yield a closed form asymptotic distribution for any given value of the smoothness parameter.} For the subsequent development, it suffices to assume that $\mu_{\beta}(s)=0$, i.e., the data has been detrended. Hence, we work with a zero-centered stationary Gaussian process with the Mat\'ern covariance function in (\ref{eq:matern}), a fixed smoothness parameter $\nu$ and with the sampling locations $\chi_n$ restricted to a bounded region. 

%\textcolor{blue}
{
	%In this paper, we do not consider estimating the smoothness parameter $\nu$.
	%Following the earlier framework of \cite{Steinbook, Zhang04, KS13}, we focus on the inferential properties of the microergodic parameters obtained from Vecchia’s approximation given the smoothness parameter.
	The balance of this article is arranged as follows. One of our key results, Theorem~\ref{thm:cond_matern}, is presented in Section~\ref{sec: general_cond}, providing general criteria for asymptotic normality of maximum likelihood estimates of microergodic parameters obtained from Vecchia's approximation. In Section~\ref{sec: Matern_example}, we demonstrate that these general criteria are implied by a condition on the conditioning size which grows much slower than the sample size. We numerically check the conclusions for one-dimensional cases and extend the discussion for two-dimensional cases in Section~\ref{subsec: sim_1}. %A graphical method for diagnosing the model fitting of using Vecchia's approximation for finite sample is proposed and illustrated by simulation studies in Section~\ref{subsec: sim_2}.
	%Unfortunately, it seems more challenging to check these criteria in higher dimensions, and we leave this for future work. 
}

%The balance of the article is arranged as follows. Our primary result is presented in Section~\ref{sec: general_cond}. A simple sufficient condition based on the size of $S_{(i)}$ for $\hat{\sigma}_{n, vecch}^2$ to have the same asymptotic distribution as $\hat{\sigma}_n^2$ for Mat\'ern model in one-dimension is proved theoratically in Section~\ref{sec: Matern_example} and discussed numerically in Section~\ref{sec: simulation}. A graphical method for diagnosing the model fitting of using Vecchia's approximation is proposed and illustrated in simulation studies in Section~\ref{sec: simulation}. Finally, we conclude with some insights and directions for future work in Section~\ref{sec: conclusion}.

% section 2: Extension of Hao Zhang's work, consider response and latent model
% section 3: Some examples: especially matern on one dimension
% section 4: Simulations: TBD
% section 5: Conclusions: TBD

%------------------------------------------------------------------------------------------
\section{Infill Asymptotics for Vecchia's approximation}\label{sec: general_cond}
\subsection{Microergodic parameters}\label{subsec: Identifiability}
Identifiability and consistent estimation of $\theta$ in (\ref{eq:matern}) relies upon the equivalence and orthogonality of Gaussian measures. Two probability measures $P_1$ and $P_2$ on a measurable space $(\Omega, {\cal F})$ are said to be \textit{equivalent}, denoted $P_1 \equiv P_2$, if they are absolutely continuous with respect to each other. Thus, $P_1 \equiv P_2$ implies that for all $A\in {\cal F}$, $P_1(A)=0$ if and only if $P_2(A)=0$. On the other hand, $P_1$ and $P_2$ are orthogonal, denoted $P_1 \perp P_2$, if there exists $A\in {\cal F}$ for which $P_1(A)=1$ and $P_2(A)=0$. While measures may be neither equivalent nor orthogonal, Gaussian measures are in general one or the other. For a Gaussian probability measure $P_{\theta}$ indexed by a set of parameters $\theta$ %\textcolor{blue}
{and $\kappa$, a function of $\theta$, we say that $\kappa(\theta)$ is \textit{microergodic} if $\kappa(\theta_1) \neq \kappa(\theta_2)$ implies $P_{\theta_1} \perp P_{\theta_2}$} \citep[see, e.g.,][]{Stein99, Zhang12}. Two Gaussian probability measures defined by Mat\'ern covariance functions $K_{\theta_1}(h)$ and $K_{\theta_2}(h)$, where $\theta_1 = \{\sigma^2_1,\phi_1,\nu\}$ and $\theta_2=\{\sigma^2_2,\phi_2,\nu\}$ are equivalent if and only if $\sigma_1^2 \phi_1^{2 \nu} = \sigma_2^2 \phi_2^{2 \nu}$ \citep[Theorem~2 in ][]{Zhang04}. Consequently, one cannot consistently estimate $\sigma^2$ or $\phi$ \citep[Corollary~1 in][]{Zhang04} from full Gaussian process likelihood functions, $\sigma^2 \phi^{2\nu}$ is a microergodic parameter that can be consistently estimated. 

If the oracle (data generating) values of $\phi$ and $\sigma^2$ are $\phi_0$ and $\sigma^2_0$, respectively, then for any fixed value of the decay $\phi = \phi_1$, we know from \citet[Theorem~5]{DZM09} that
\begin{equation}\label{eq: asym_normal_c}
	\sqrt{n}(\hat{\sigma}_n^2\phi_1^{2\nu}  - \sigma_0^2\phi_0^{2\nu}) \overset{{\cal L}}{ \longrightarrow} N(0, 2( \sigma_0^2\phi_0^{2\nu})^2)\;, 
\end{equation}
where $\hat{\sigma}_n^2$ is the maximum likelihood estimator from the full likelihood (\ref{eq: full_likelihood}). 

\subsection{Parameter estimation}\label{subsec: parameter_estimation}
Let $\hat{\sigma}_{n,vecch}^2$ be the maximum likelihood estimate of the variance $\sigma^2$,
\begin{equation}\label{eq:vercchiaphi1}
	\hat{\sigma}_{n, vecch}^2 = \mbox{argmax}_{\sigma^2}\{ \tilde{p}(Y_n\given \phi_1, \sigma^2),\; \sigma^2 \in \mathbb{R}^{+}\} \;,
\end{equation}
where $\tilde{p}(\cdot)$ is the density (\ref{eq: vecchia_approx}). We develop the asymptotic equivalence of $\hat{\sigma}_{n, vecch}^2$  with $\hat{\sigma}_{n}^2$. To proceed further, we introduce some notations. Assume that the target process $y(s)\sim GP(0, K_{\theta}(\cdot))$, where $K_{\theta}(h)$ is defined in (\ref{eq:matern}) with a fixed $\nu$. Let $P_{j}$, $j=0,1$ denote probability measures for $y(s)\sim GP(0, K_{\theta_i})$ with  $\theta_j = \{\sigma_j^2, \phi_j, \nu\}$. Assume that $\sigma_1^2 = \sigma_0^2 \phi_0^{2\nu} / \phi_1^{2\nu}$ and let $E_j(\cdot)$ denote the expectation with respect to probability measure $P_j$, $j = 0, 1$. We define 
\begin{equation}\label{eq: useful_quantities}
	\begin{split}
		e_{0, j} &:= y_1, \;  \mu_{i, j} := E_j(y_i \given y_{(i - 1)}),\; e_{i, j} := y_i - \mu_{i , j},\; i = 2, \ldots, n\\
		\tilde{e}_{0, j} &:= y_1, \;  \tilde{\mu}_{i, j} := E_j(y_i \given S_{(i - 1)}), \;\tilde{e}_{i, j} := y_i - \tilde{\mu}_{i , j},\; i = 2, \ldots, n\; .
	\end{split}
\end{equation}
In Lemma~\ref{lemma: useful_expression} we derive a useful expression for $\hat{\sigma}_{n,vecch}^2$ using the quantities in \eqref{eq: useful_quantities}.    
\begin{lemma}\label{lemma: useful_expression}
	The estimate of $\sigma^2$ from Vecchia's likelihood approximation with fixed $\nu$, and $\phi = \phi_1$ can be expressed as
	\begin{equation}\label{eq: useful_expression}
		\hat{\sigma}^2_{n , vecch} 
		%= \frac{1}{n} \sum_{i=1}^n \frac{\tilde{e}_{i, 1}^2}{\widetilde{\Sigma}^{\dagger}_{i,1}}
		= \frac{\sigma_1^2}{n} \sum_{i = 1}^n \frac{\tilde{e}_{i, 1}^2}{E_1\tilde{e}_{i, 1}^2}\;.
	\end{equation}
\end{lemma}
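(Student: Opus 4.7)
The plan is a direct maximization of the Vecchia log-likelihood, exploiting the fact that $\sigma^2$ enters the Mat\'ern covariance only as a scale factor. Writing $K_\theta(h) = \sigma^2 R_{\phi,\nu}(h)$ where $R_{\phi,\nu}$ is the correlation function, the Kriging weights $K_\theta(s_i,S_{(i-1)})K_\theta(S_{(i-1)},S_{(i-1)})^{-1}$ are invariant under a common rescaling of $K_\theta$, so $\tilde{\mu}_{i,1}$ depends only on $\phi_1$ (and $\nu$) and the conditioning geometry, not on $\sigma^2$. Likewise, the one-step conditional variance equals $\sigma^2 r_i$, where $r_i$ is the prediction variance of the unit-scale process and again does not depend on $\sigma^2$. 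This separability is what enables a closed-form MLE.

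Using these facts, I would write the negative twice Vecchia log-likelihood as
\[
-2\log \tilde{p}(Y_n\given \phi_1,\sigma^2) = n\log(2\pi) + \sum_{i=1}^n \log(\sigma^2 r_i) + \frac{1}{\sigma^2}\sum_{i=1}^n \frac{\tilde{e}_{i,1}^2}{r_i},
\]
under the natural convention $\tilde{\mu}_{1,1}=0$ and $r_1=1$, so that the marginal factor $p(y_1\given\theta) = N(y_1;0,\sigma^2)$ corresponds to the $i=1$ summand (consistent with $\tilde{e}_{0,1}=y_1$). Setting the derivative in $\sigma^2$ to zero immediately produces the closed-form maximizer
\[
\hat{\sigma}^2_{n,vecch} = \frac{1}{n}\sum_{i=1}^n \frac{\tilde{e}_{i,1}^2}{r_i}.
\]

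The final step is to replace $r_i$ by a quantity involving $E_1\tilde{e}_{i,1}^2$. Under $P_1$, the conditional variance $\mathrm{Var}_1(y_i\given S_{(i-1)}) = \sigma_1^2 r_i$ is deterministic (as always in a Gaussian model), so by the tower property
\[
E_1 \tilde{e}_{i,1}^2 = E_1\bigl[\mathrm{Var}_1(y_i\given S_{(i-1)})\bigr] = \sigma_1^2 r_i.
\]
Substituting $r_i = E_1\tilde{e}_{i,1}^2/\sigma_1^2$ into the expression above yields the stated formula \eqref{eq: useful_expression}. The argument is mechanical; the only item requiring care is isolating the $\sigma^2$-dependence from the $\phi_1$-dependence in each conditional Gaussian factor, so that the score equation for $\sigma^2$ has a single explicit solution.
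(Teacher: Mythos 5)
Your proof is correct and follows essentially the same route as the paper's: both isolate $\sigma^2$ as a pure scale factor (so the conditional means are $\sigma^2$-free and each conditional variance factors as $\sigma^2 r_i$, the paper's $\sigma^2\widetilde{\Sigma}^{\dagger}_{i,1}$), maximize the resulting product of Gaussians in closed form via the score equation, and substitute $E_1\tilde{e}_{i,1}^2 = \sigma_1^2 r_i$. Your explicit treatment of the $i=1$ marginal term and of the derivative in $\sigma^2$ is simply a more detailed write-up of what the paper compresses into ``a direct computation.''
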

\begin{proof}
	In Vecchia's approximation \eqref{eq: vecchia_approx} with fixed $\nu$, $\phi = \phi_1$ and unknown $\sigma^2$ in $K_{\theta}(\cdot)$, $p(y_i \given S_{(i-1)})$ is Gaussian with mean $\widetilde{\mu}_{i,1} =  \widetilde{\Sigma}_{i,1}^{12}( \widetilde{\Sigma}_{i,1}^{22})^{-1} S_{(i-1)}$ and variance $\widetilde{\Sigma}_{i,1}:= \widetilde{\Sigma}_{i,1}^{11} - \widetilde{\Sigma}_{i,1}^{12} (\widetilde{\Sigma}_{i,1}^{22})^{-1}\widetilde{\Sigma}_{i,1}^{21}$,
	where $\displaystyle \begin{pmatrix}
		\widetilde{\Sigma}_{i,1}^{11} & \widetilde{\Sigma}_{i,1}^{12} \\
		\widetilde{\Sigma}_{i,1}^{21} & \widetilde{\Sigma}_{i,1}^{22}\end{pmatrix}$ is the covariance matrix of $\displaystyle \begin{pmatrix} y_{i} \\ S_{(i-1)} \end{pmatrix}$ under $\tilde{p}(\cdot; \phi_1, \sigma^2)$. Since $\widetilde{\mu}_{i,1}$ does not depend on $\sigma^2$, and $\widetilde{\Sigma}_{i,1}$ can be expressed as $\sigma^2  \widetilde{\Sigma}^{\dagger}_{i,1}$, where $\widetilde{\Sigma}^{\dagger}_{i,1}$ does not depend upon $\sigma^2$, the conditional distributions $p(y_i \given S_{(i - 1)})$ under Vecchia's approximation is
	\[
	p(y_i \given S_{(i - 1)}) = \frac{1}{\sqrt{2\pi \sigma^2 \widetilde{\Sigma}^{\dagger}_{i,1}}} \exp\left( - \frac{\tilde{e}_{i, 1}^2}{2 \sigma^2 \widetilde{\Sigma}^{\dagger}_{i,1}}  \right).
	\]
	A direct computation of \eqref{eq:vercchiaphi1} with any fixed $\phi_1$ yields (\ref{eq: useful_expression}),
	\begin{comment}
		\begin{equation}
			\hat{\sigma}^2_{n , vecch} = \frac{1}{n} \sum_{i=1}^n \frac{\tilde{e}_{i, 1}^2}{\widetilde{\Sigma}^{\dagger}_{i,1}}= \frac{\sigma_1^2}{n} \sum_{i = 1}^n \frac{\tilde{e}_{i, 1}^2}{E_1\tilde{e}_{i, 1}^2},
		\end{equation}
	\end{comment}
	where we have used the fact $E_1 \tilde{e}^2_{i,1} = \sigma_1^2 \widetilde{\Sigma}^{\dagger}_{i,1}$ on the right hand side of (\ref{eq: useful_expression}). 
\end{proof}

Our main result builds on the discussion in Section~10.5.3 of \cite{Zhang12} to establish the following theorem that explores the asymptotic distribution of $\hat{\sigma}_{n, vecch}^2$. 

\begin{theorem}\label{thm:cond_matern}
	Assume that either of the following conditions holds:
	\begin{equation}\label{eq: thm1_cond2}
		\sum_{i = 1}^n \frac{E_0(\tilde{e}_{i, 1} - e_{i, 0})^2}{E_1\tilde{e}_{i, 1}^2} = \mathcal{O}(1) \quad \mbox{and} \quad
		\sum_{i = 1}^n \left(\frac{E_0e_{i, 0}^2}{E_1\tilde{e}_{i, 1}^2} - 1 \right)^2 = \mathcal{O}(1).
	\end{equation}
	or
	\begin{equation}\label{eq: thm1_cond3}
		\sum_{i = 1}^n \frac{E_1(\tilde{e}_{i, 1} - e_{i, 0})^2}{E_0e_{i, 0}^2} = \mathcal{O}(1) \quad \mbox{and} \quad
		\sum_{i = 1}^n \left(\frac{E_1\tilde{e}_{i, 1}^2}{E_0e_{i, 0}^2} - 1 \right)^2 = \mathcal{O}(1). 
	\end{equation}
	Then
	\begin{equation}\label{eq: thm1_vecch_MLE_asym_distr}
		\sqrt{n}(\hat{\sigma}_{n, vecch}^2\phi_1^{2\nu}  - \sigma_0^2\phi_0^{2\nu}) \overset{\mathcal{L}}{ \longrightarrow} N(0, 2( \sigma_0^2\phi_0^{2\nu})^2)\;.
	\end{equation}
\end{theorem}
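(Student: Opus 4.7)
\medskip
\noindent\textit{Proof plan.} The strategy is to reduce the stated CLT to the classical one for i.i.d.\ $\chi_1^2-1$ variables via Slutsky's lemma. By Lemma~\ref{lemma: useful_expression} and the identity $\sigma_1^2\phi_1^{2\nu}=\sigma_0^2\phi_0^{2\nu}$,
\[
\sqrt{n}\bigl(\hat{\sigma}_{n,vecch}^2\phi_1^{2\nu} - \sigma_0^2\phi_0^{2\nu}\bigr) = \frac{\sigma_0^2\phi_0^{2\nu}}{\sqrt{n}}\sum_{i=1}^n\!\left(\frac{\tilde{e}_{i,1}^2}{E_1\tilde{e}_{i,1}^2}-1\right).
\]
Under $P_0$, the normalised innovations $\{e_{i,0}/\sqrt{E_0 e_{i,0}^2}\}_{i\ge 1}$ are i.i.d.\ $N(0,1)$ (they are the Cholesky factors of $y_{(n)}$ under the true covariance), so the classical CLT yields $\frac{1}{\sqrt{n}}\sum_{i=1}^n(e_{i,0}^2/E_0 e_{i,0}^2-1)\overset{\mathcal{L}}{\longrightarrow} N(0,2)$. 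It therefore suffices to show
\[
D_n := \frac{1}{\sqrt{n}}\sum_{i=1}^n\!\left(\frac{\tilde{e}_{i,1}^2}{E_1\tilde{e}_{i,1}^2}-\frac{e_{i,0}^2}{E_0 e_{i,0}^2}\right) \overset{P_0}{\longrightarrow} 0
\]
under either (\ref{eq: thm1_cond2}) or (\ref{eq: thm1_cond3}).

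Set $\delta_i := \tilde{e}_{i,1}-e_{i,0}$. Since $\delta_i$ is a linear function of $y_{(i-1)}$ while $e_{i,0}$ is the residual of the $P_0$-projection onto $y_{(i-1)}$, the orthogonality $E_0[e_{i,0}\delta_i]=0$ holds; in particular $E_0\tilde{e}_{i,1}^2 = E_0 e_{i,0}^2 + E_0\delta_i^2$. Expanding $\tilde{e}_{i,1}^2 = e_{i,0}^2 + 2 e_{i,0}\delta_i + \delta_i^2$ and regrouping produces the algebraic identity
\[
\frac{\tilde{e}_{i,1}^2}{E_1\tilde{e}_{i,1}^2}-\frac{e_{i,0}^2}{E_0 e_{i,0}^2} = \underbrace{\!\left(\tfrac{e_{i,0}^2}{E_0 e_{i,0}^2}-1\right)\!\left(\tfrac{E_0 e_{i,0}^2}{E_1\tilde{e}_{i,1}^2}-1\right)\!}_{(I)_i} + \underbrace{\left(\tfrac{E_0 e_{i,0}^2}{E_1\tilde{e}_{i,1}^2}-1\right)}_{(II)_i} + \underbrace{\frac{2 e_{i,0}\delta_i+\delta_i^2}{E_1\tilde{e}_{i,1}^2}}_{(III)_i}.
\]

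Under condition~(\ref{eq: thm1_cond2}) I would control these three pieces in $L^2(P_0)$. $(I)_i$ is a linear combination of mean-zero variables $(e_{i,0}^2/E_0 e_{i,0}^2-1)$ with deterministic weights whose squared sum is $\mathcal{O}(1)$, so $\sum_i (I)_i = \mathcal{O}_{P_0}(1)$ by $L^2$-orthogonality. The stochastic term $2 e_{i,0}\delta_i/E_1\tilde{e}_{i,1}^2$ inside $(III)_i$ has $P_0$-variance bounded via Wick's formula and Cauchy--Schwarz by a product of the two sums in (\ref{eq: thm1_cond2}), while its mean $\sum_i E_0\delta_i^2/E_1\tilde{e}_{i,1}^2$ is $\mathcal{O}(1)$ by the first sum in (\ref{eq: thm1_cond2}); hence $\sum_i[(I)_i+(III)_i]/\sqrt{n}\to 0$ in probability. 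For the deterministic piece $(II)_i$, one uses $E_0 e_{i,0}^2/E_1\tilde{e}_{i,1}^2-1 = (E_0\tilde{e}_{i,1}^2 - E_1\tilde{e}_{i,1}^2)/E_1\tilde{e}_{i,1}^2 - E_0\delta_i^2/E_1\tilde{e}_{i,1}^2$: the second part sums to $\mathcal{O}(1)$, and the first invokes the equivalence $P_0\equiv P_1$ (guaranteed by $\sigma_0^2\phi_0^{2\nu}=\sigma_1^2\phi_1^{2\nu}$ through Feldman--H\'ajek for Mat\'ern processes, cf.\ \cite{Zhang04}) to deliver a trace-class-type bound forcing the partial sums to be $o(\sqrt{n})$.

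The alternative hypothesis (\ref{eq: thm1_cond3}) is handled by the symmetric decomposition in which $E_0 e_{i,0}^2$ replaces $E_1\tilde{e}_{i,1}^2$ as the normalising variance; the estimates are mirror images of the above, again closing via measure equivalence. The main obstacle in either case is the deterministic term $(II)_i$: a crude Cauchy--Schwarz from the squared-sum hypotheses gives only $\sum_i (II)_i = \mathcal{O}(\sqrt{n})$, exactly on the critical scale, so the extra cancellation extracted from the equivalence of $P_0$ and $P_1$ is what upgrades this estimate to $o(\sqrt{n})$ and drives the choice of Mat\'ern covariance with equal smoothness.
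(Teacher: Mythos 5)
Your overall skeleton is the same as the paper's: reduce via Lemma~\ref{lemma: useful_expression} to showing that $n^{-1/2}\bigl[\sum_i \tilde e_{i,1}^2/E_1\tilde e_{i,1}^2-\sum_i e_{i,0}^2/E_0e_{i,0}^2\bigr]\to 0$, and let the CLT for the normalized innovations supply the $N(0,2)$ limit. Your treatment of the stochastic pieces $(I)_i$ and $(III)_i$ by direct variance bounds (using the $P_0$-independence of $e_{i,0}$ and $\delta_i$, then Chebyshev/Markov) is a legitimate and genuinely more elementary alternative to the paper's route: the paper instead controls the centered part through the likelihood ratio $\rho_n = \tilde p(Y_n;\phi_1,\sigma_1^2)/p(Y_n;\phi_0,\sigma_0^2)$, observing that \eqref{eq: thm1_cond2} (resp.\ \eqref{eq: thm1_cond3}) is exactly the Shiryaev--Kakutani criterion for equivalence of the Vecchia measure $\widetilde P_1$ with $P_0$, so that $\log\rho_n$ and $E_0\log\rho_n$ are both $\mathcal{O}(1)$ and their difference is the centered sum.

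The genuine gap is in the deterministic term $(II)_i$, precisely the step you flag as the main obstacle. You propose that equivalence of the \emph{full} Gaussian measures $P_0\equiv P_1$ (Feldman--H\'ajek, via $\sigma_0^2\phi_0^{2\nu}=\sigma_1^2\phi_1^{2\nu}$) delivers a ``trace-class-type bound'' upgrading $\sum_{i\le n}(II)_i$ from $\mathcal{O}(\sqrt n)$ to $o(\sqrt n)$. This is not a proof, and it is doubtful it can be completed as stated: equivalence of $P_0$ and $P_1$ constrains the innovations $e_{i,0},e_{i,1}$ of the two full models, but it gives no direct handle on $E_0\tilde e_{i,1}^2-E_1\tilde e_{i,1}^2$ for the Vecchia residuals $\tilde e_{i,1}$, which are innovations of neither full measure. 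More importantly, no measure-theoretic input is needed here at all. Writing $w_i := E_0e_{i,0}^2/E_1\tilde e_{i,1}^2-1$, the hypothesis $\sum_{i=1}^n w_i^2=\mathcal{O}(1)$ uniformly in $n$, with nonnegative summands, means $\sum_{i=1}^\infty w_i^2<\infty$; so for any $\varepsilon>0$ there is $M$ with $\sum_{i>M}w_i^2<\varepsilon$, and then
\begin{equation*}
\sum_{i=1}^n |w_i| \le \sum_{i=1}^M |w_i| + \sqrt{(n-M)\,\varepsilon},
\end{equation*}
whence $\limsup_n n^{-1/2}\sum_{i=1}^n|w_i|\le\sqrt{\varepsilon}$ for every $\varepsilon$, i.e.\ $\sum_{i=1}^n|w_i|=o(\sqrt n)$. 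This tail-splitting refinement of Cauchy--Schwarz is exactly the paper's Lemma~\ref{lemma: useful_expression_2}; your ``crude Cauchy--Schwarz gives only $\mathcal{O}(\sqrt n)$'' is what happens only if you refuse to split off the convergent tail. A secondary under-specified point: the case \eqref{eq: thm1_cond3} is not a pure ``mirror image,'' because your orthogonality $E_0[e_{i,0}\delta_i]=0$ is a $P_0$-statement while \eqref{eq: thm1_cond3} bounds the $P_1$-variances $E_1\delta_i^2$; transferring these to the $P_0$-variances in your Chebyshev bounds is where equivalence of $P_0$ and $P_1$ (uniform comparability of variances of linear functionals) is actually needed --- not at $(II)_i$.
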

Before presenting the proof of Theorem~\ref{thm:cond_matern}, we state and prove the following lemma.    
\begin{lemma}\label{lemma: useful_expression_2}
	The assumptions in (\ref{eq: thm1_cond2}) %or (\ref{eq: thm1_cond3}) 
	imply that
	\begin{equation} \label{eq: cond_to_proveA}
		E_0\left[ \sum_{i = 1}^{n} \frac{\tilde{e}_{i, 1}^2}{E_1\tilde{e}_{i, 1}^2} - \sum_{i = 1}^{n} \frac{e_{i, 0}^2}{E_0e_{i, 0}^2} \right] = o(\sqrt{n}),
	\end{equation}
\end{lemma}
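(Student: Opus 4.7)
The strategy is to push $E_0$ inside the sum, exploit an orthogonality identity for the innovation $e_{i,0}$ under $P_0$ to split $E_0\tilde e_{i,1}^2$ into two nonnegative pieces, and then match each piece with one of the hypotheses in (\ref{eq: thm1_cond2}). Taking the expectation, the second sum in (\ref{eq: cond_to_proveA}) collapses exactly to $n$ because $E_0 e_{i,0}^2/E_0 e_{i,0}^2=1$; so the task reduces to showing
\[
\sum_{i=1}^n \left(\frac{E_0\tilde e_{i,1}^2}{E_1\tilde e_{i,1}^2}-1\right) = o(\sqrt{n}).
\]

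For the decomposition I would write $\tilde e_{i,1}=e_{i,0}+(\mu_{i,0}-\tilde\mu_{i,1})$. Since both $\mu_{i,0}$ and $\tilde\mu_{i,1}$ are $y_{(i-1)}$-measurable and $E_0[e_{i,0}\mid y_{(i-1)}]=0$ under $P_0$, the cross term vanishes, giving the Pythagorean identity
\[
E_0\tilde e_{i,1}^2 \;=\; E_0 e_{i,0}^2 + E_0(\tilde e_{i,1}-e_{i,0})^2.
\]
Dividing by $E_1\tilde e_{i,1}^2$, subtracting $1$, and summing splits the target into
\[
\mathrm{(I)}=\sum_{i=1}^n \frac{E_0(\tilde e_{i,1}-e_{i,0})^2}{E_1\tilde e_{i,1}^2} \quad\text{and}\quad \mathrm{(II)}=\sum_{i=1}^n\!\left(\frac{E_0 e_{i,0}^2}{E_1\tilde e_{i,1}^2}-1\right).
\]
Part~(I) is $\mathcal{O}(1)$, hence $o(\sqrt n)$, directly from the first half of (\ref{eq: thm1_cond2}).

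The main obstacle is part~(II). Writing $a_i := E_0 e_{i,0}^2/E_1\tilde e_{i,1}^2 - 1$, the second half of (\ref{eq: thm1_cond2}) gives $\sum_{i=1}^n a_i^2 = \mathcal{O}(1)$, and a bare application of Cauchy--Schwarz only yields $|\sum_{i\le n} a_i| \le \sqrt n\,(\sum a_i^2)^{1/2} = \mathcal{O}(\sqrt n)$, which is not strong enough. To sharpen this to $o(\sqrt n)$ I would use that each $a_i$ is determined by the first $i$ sites and the Vecchia conditioning rule, and is therefore independent of the total sample size~$n$; the hypothesis then forces the series $\sum_{i\ge 1} a_i^2$ to converge. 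A standard tail-truncation completes the plan: for any $\epsilon>0$, fix $m$ with $\sum_{i>m} a_i^2<\epsilon$ and bound
\[
\frac{1}{\sqrt n}\Bigl|\sum_{i=1}^n a_i\Bigr| \;\le\; \frac{|\sum_{i\le m} a_i|}{\sqrt n} \;+\; \sqrt{\tfrac{n-m}{n}}\,\Bigl(\sum_{i=m+1}^n a_i^2\Bigr)^{\!1/2};
\]
letting $n\to\infty$ with $m$ fixed and then $\epsilon\downarrow 0$ yields $\mathrm{(II)}=o(\sqrt n)$, which combined with the bound on~(I) closes the argument.
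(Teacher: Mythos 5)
Your proof is correct and takes essentially the same route as the paper's: the identical Pythagorean decomposition $E_0\tilde e_{i,1}^2 = E_0 e_{i,0}^2 + E_0(\tilde e_{i,1}-e_{i,0})^2$ (which the paper justifies via independence of $\tilde e_{i,1}-e_{i,0}$ and $e_{i,0}$ under $P_0$, equivalent to your conditional-expectation argument), the first hypothesis handling part (I), and the same tail-truncation plus Cauchy--Schwarz step to upgrade part (II) from $\mathcal{O}(\sqrt n)$ to $o(\sqrt n)$. Your explicit remark that each $a_i$ is free of $n$, so that the $\mathcal{O}(1)$ bound forces convergence of the infinite series, is a point the paper uses but leaves implicit.
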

\begin{proof}
	%We proceed to proving that the condition \eqref{eq: thm1_cond2} or \eqref{eq: thm1_cond3} implies \eqref{eq: cond_to_prove}. 
	We first prove that \eqref{eq: thm1_cond2} implies (\ref{eq: cond_to_proveA}). Note that 
	\begin{align*}
		\Bigg| E_0\left[ \sum_{i = 1}^{n} \frac{\tilde{e}_{i, 1}^2}{E_1\tilde{e}_{i, 1}^2} - \sum_{i = 1}^{n} \frac{e_{i, 0}^2}{E_0e_{i, 0}^2} \right] \Bigg| & = \Bigg| \sum_{i = 1}^{n} \left( \frac{E_0( \tilde{e}_{i, 1} - e_{i, 0} + e_{i, 0} )^2}{E_1\tilde{e}_{i, 1}^2} - 1 
		\right) \Bigg| \\
		& = \Bigg| \sum_{i = 1}^n \frac{E_0(\tilde{e}_{i, 1} - e_{i, 0})^2}{E_1\tilde{e}_{i, 1}^2} +
		\sum_{i = 1}^n \left(\frac{E_0e_{i, 0}^2}{E_1\tilde{e}_{i, 1}^2} - 1 \right) \Bigg| \\
		& \le \sum_{i = 1}^n \frac{E_0(\tilde{e}_{i, 1} - e_{i, 0})^2}{E_1\tilde{e}_{i, 1}^2}  + \sum_{i = 1}^n \Bigg| \frac{E_0e_{i, 0}^2}{E_1\tilde{e}_{i, 1}^2} - 1 \Bigg|,
	\end{align*}
	where the second equality follows from the fact that $\tilde{e}_{i, 1} - e_{i, 0}$ and $e_{i,0}$ are independent under $P_0$.
	By the first condition in \eqref{eq: thm1_cond2}, we get $\sum_{i = 1}^n E_0(\tilde{e}_{i, 1} - e_{i, 0})^2/E_1\tilde{e}_{i, 1}^2 = \mathcal{O}(1) = o(\sqrt{n})$. 
	Fix $\varepsilon>0$.
	By the second condition in \eqref{eq: thm1_cond2}, there is 
	$M > 0$ such that $\sum_{i>M} \left(E_0e_{i, 0}^2/E_1\tilde{e}_{i, 1}^2 - 1\right)^2<\varepsilon$.
	So for $n>M$, we can use the Cauchy–Schwarz inequality to obtain
	\begin{align*}
		\sum_{i=1}^n \left| \frac{E_0e_{i, 0}^2}{E_1\tilde{e}_{i, 1}^2} - 1 \right| &=\sum_{i=1}^M
		\left| \frac{E_0e_{i, 0}^2}{E_1\tilde{e}_{i, 1}^2} - 1 \right|+\sum_{i=M+1}^n \left| \frac{E_0e_{i, 0}^2}{E_1\tilde{e}_{i, 1}^2} - 1 \right| \\
		& \le \sum_{i=1}^M \left| \frac{E_0e_{i, 0}^2}{E_1\tilde{e}_{i, 1}^2} - 1 \right|+\sqrt{(n-M)\varepsilon}\;.
	\end{align*} 
	Dividing both sides by $\sqrt{n}$ reveals that
	$\limsup_{n \rightarrow \infty} \frac1{\sqrt{n}} \sum_{i=1}^n 
	\left| E_0e_{i, 0}^2/E_1\tilde{e}_{i, 1}^2 - 1 \right|
	\leq \sqrt{\varepsilon}$.
	Since $\varepsilon>0$ is arbitrary, it follows that
	$\sum_{i = 1}^n \left| E_0e_{i, 0}^2/E_1\tilde{e}_{i, 1}^2 - 1 \right| = o(\sqrt{n})$ %(since $\sum_{i=1}^{\infty} a_i < \infty$ implies $\sum_{i=1}^n \sqrt{a_i} = o(\sqrt{n})$ for $a_i \ge 0$. See Lemma~\ref{lemma: SM_sum} in Supplementary material for the proof.).
	and we obtain \eqref{eq: cond_to_proveA}. 
	
	%The proof that \eqref{eq: thm1_cond3} implies \eqref{eq: cond_to_proveA} is essentially unchanged with \eqref{eq: thm1_cond2} replaced by \eqref{eq: thm1_cond3}.
\end{proof}

We now present a proof of Theorem~\ref{thm:cond_matern}.
\begin{proof}[Proof of Theorem~\ref{thm:cond_matern}]
	Recall that $\sum_{i = 1}^{n}e_{i, 0}^2/(E_0e_{i, 0}^2) = Y_n^\top V_{n, 0}^{-1} Y_n$, 
	where $V_{n , 0}$ is the covariance matrix of $Y_n$ under $P_0$.  Using (\ref{eq: useful_expression}) that was derived in Lemma~\ref{lemma: useful_expression}, we obtain 
	\begin{equation}\label{eq:split2}
		\sqrt{n}( \hat{\sigma}_{n, vecch}^2/\sigma_1^2 - 1 ) = \frac{1}{\sqrt{n}} \left[ 
		\sum_{i = 1}^{n} \frac{\tilde{e}_{i, 1}^2}{E_1\tilde{e}_{i, 1}^2} - \sum_{i = 1}^{n} \frac{e_{i, 0}^2}{E_0e_{i, 0}^2}\right] + \sqrt{n}\left( \frac{1}{n} Y_n^\top V_{n , 0}^{-1}Y_n - 1 \right).
	\end{equation}
	By the central limit theorem, we have $\sqrt{n}\left( \frac{1}{n} Y_n^\top V_{n , 0}^{-1}Y_n - 1 \right) \overset{\mathcal{L}}{\longrightarrow} N(0, 2)$. 
	
	We next show that the condition \eqref{eq: thm1_cond2} implies that
	\begin{equation}\label{eq: cond_to_prove}
		\begin{split}
			\sum_{i = 1}^{n} \frac{\tilde{e}_{i, 1}^2}{E_1\tilde{e}_{i, 1}^2} - \sum_{i = 1}^{n} \frac{e_{i, 0}^2}{E_0e_{i, 0}^2} &=  o(\sqrt{n}). 
		\end{split}
	\end{equation}
	To prove this, it will be sufficient to show that
	\begin{equation} \label{eq: cond_to_proveB}
		\sum_{i = 1}^{n} \frac{\tilde{e}_{i, 1}^2}{E_1\tilde{e}_{i, 1}^2} - \sum_{i = 1}^{n} \frac{e_{i, 0}^2}{E_0e_{i, 0}^2} - E_0\left[ \sum_{i = 1}^{n} \frac{\tilde{e}_{i, 1}^2}{E_1\tilde{e}_{i, 1}^2} - \sum_{i = 1}^{n} \frac{e_{i, 0}^2}{E_0e_{i, 0}^2} \right] = O(1).
	\end{equation}
	The result in (\ref{eq: cond_to_prove}) will then follow from Lemma~\ref{lemma: useful_expression_2}. We turn to proving \eqref{eq: cond_to_proveB}. Our argument relies on the equivalence of Gaussian sequences.
	Let $\widetilde{P}_{1,n}$ be the probability distribution corresponding to $\tilde{p}(Y_n; \phi_1,\sigma_1^2)$, and 
	let $\rho_n := \tilde{p}(Y_n; \phi_1,\sigma_1^2) / p(Y_n ; \phi_0,\sigma_0^2)$ be the Radon-Nikodym derivative of $\widetilde{P}_{1,n}$ with respect to $P_0$ on the realization $Y_n$ for a given $n$.
	Write $\widetilde{P}_{1,\infty}$ (respectively, $P_{0,\infty}$) for the probability distribution corresponding to $\tilde{p}(\cdot; \phi_1,\sigma_1^2)$ (respectively, $p(\cdot ; \phi_0,\sigma_0^2)$) on the infinite sequence $(Y_1, Y_2, \ldots)$.
	By Kakutani's dichotomy, $\widetilde{P}_{1,\infty}$ and $P_{0,\infty}$ are either equivalent or mutually singular to each other.
	If $\widetilde{P}_{1,\infty}$ is equivalent to $P_{0,\infty}$, then $\mbox{lim}_{n \rightarrow \infty} \rho_n = \rho_{\infty}=: d\widetilde{P}_{1,\infty} /dP_{0,\infty}$ with $P_{0}$-probability $1$ \citep[see, e.g.,][Section III.2.1]{IR78}. Also, $\mathbb{P}_0(0 < \rho_{\infty} < \infty) = 1$ and $- \infty < E_0(\log \rho_\infty) < \infty$.
	%\begin{equation*}
	%\mathbb{P}_0(0 < \rho_{\infty} < \infty) = 1 \quad \mbox{and} \quad - \infty < E_0(\log \rho_\infty) < \infty.
	%\end{equation*}
	As a consequence, 
	\begin{align*}
		\log \rho_n &= -\frac{1}{2} \log \frac{\det \widetilde{V}_{n,1}}{\det V_{n,0}} - \frac{1}{2}\left(\sum_{i = 1}^{n} \frac{\tilde{e}_{i, 1}^2}{E_1\tilde{e}_{i, 1}^2} - \sum_{i = 1}^{n} \frac{e_{i, 0}^2}{E_0e_{i, 0}^2} \right) = \mathcal{O}(1), \\
		E_0(\log \rho_n) & = -\frac{1}{2} \log \frac{\det \widetilde{V}_{n,1}}{\det V_{n,0}} - \frac{1}{2}E_0\left[ \sum_{i = 1}^{n} \frac{\tilde{e}_{i, 1}^2}{E_1\tilde{e}_{i, 1}^2} - \sum_{i = 1}^{n} \frac{e_{i, 0}^2}{E_0e_{i, 0}^2} \right] = \mathcal{O}(1),
	\end{align*}
	where $\widetilde{V}_{n,1}$ (respectively, $V_{n,0}$) is the covariance matrix of $Y_n$ under $\widetilde{P}_{1,\infty}$ (respectively, $P_{0}$). By taking the difference of the above two equations, we get \eqref{eq: cond_to_proveB} under the condition that $\widetilde{P}_{1,\infty}$ is equivalent to $P_{0,\infty}$. 
	Using Theorem 5, Section VII.6 of \cite{Sh96}, we can conclude that
	\begin{equation}
		\label{eq:equivPP}
		\begin{aligned}
			\widetilde{P}_{1,\infty} \mbox{ is equivalent to } P_{0,\infty} & \Longleftrightarrow \sum_{i = 1}^{\infty} \left[ \frac{E_0(\tilde{e}_{i, 1} - e_{i, 0})^2}{E_1\tilde{e}_{i, 1}^2} + \left(\frac{E_0e_{i, 0}^2}{E_1\tilde{e}_{i, 1}^2} - 1 \right)^2 \right] < \infty \\
			& \Longleftrightarrow \sum_{i = 1}^{\infty} \left[ \frac{E_1(\tilde{e}_{i, 1} - e_{i, 0})^2}{E_0e_{i, 0}^2} +\left(\frac{E_1\tilde{e}_{i, 1}^2}{E_0e_{i, 0}^2} - 1 \right)^2 \right] < \infty.
		\end{aligned}
	\end{equation}
	Since first equivalence in \eqref{eq:equivPP} is simply a reformulation of \eqref{eq: thm1_cond2}, we have established that the condition \eqref{eq: thm1_cond2} implies \eqref{eq: cond_to_proveB} and, hence, the result in \eqref{eq: cond_to_prove}. 
	
	The proof from condition~\eqref{eq: thm1_cond3} to \eqref{eq: cond_to_prove} is established following the proof from condition~\eqref{eq: thm1_cond2} to \eqref{eq: cond_to_prove}. We now break the quantity $\sum_{i = 1}^{n} \tilde{e}_{i, 1}^2/E_1\tilde{e}_{i, 1}^2 - \sum_{i = 1}^{n} e_{i, 0}^2/E_0e_{i, 0}^2$ in \eqref{eq: cond_to_prove} into 
	\begin{multline*}
		\sum_{i = 1}^{n} \frac{\tilde{e}_{i, 1}^2}{E_1\tilde{e}_{i, 1}^2} - \sum_{i = 1}^{n} \frac{e_{i, 0}^2}{E_0e_{i, 0}^2} - E_1\left[ \sum_{i = 1}^{n} \frac{\tilde{e}_{i, 1}^2}{E_1\tilde{e}_{i, 1}^2} - \sum_{i = 1}^{n} \frac{e_{i, 0}^2}{E_0e_{i, 0}^2} \right] \\ 
		+ E_1\left[ \sum_{i = 1}^{n} \frac{\tilde{e}_{i, 1}^2}{E_1\tilde{e}_{i, 1}^2} - \sum_{i = 1}^{n} \frac{e_{i, 0}^2}{E_0e_{i, 0}^2} \right] \;,
	\end{multline*}
	and replace the left hand-side of \eqref{eq: cond_to_proveB} and \eqref{eq: cond_to_proveA} by the two quantities respectively. Then the equivalence of $P_1$ and $P_0$ along with lemma~\ref{lemma: useful_expression_2} shows that the replaced \eqref{eq: cond_to_proveA} holds. The proof that \eqref{eq: thm1_cond3} implies the replaced \eqref{eq: cond_to_proveB} remains the same except that we now use the second equivalence in \eqref{eq:equivPP}. 
	%The proof from condition \eqref{eq: thm1_cond3} to the replaced \eqref{eq: cond_to_proveA} are unchanged with \eqref{eq: thm1_cond2} replaced by \eqref{eq: thm1_cond3}. 
	Thus we complete the proof of Theorem~\ref{thm:cond_matern}.
\end{proof}

Turning to the connection between Theorem~\ref{thm:cond_matern} and predictive consistency of Vecchia's approximation in the sense of \citet[p.478]{KS13}, note that $e_{i,0}$ and $\tilde{e}_{i,1}$ are the predictive errors for $y_i$ under the full model with correct parameters and under (\ref{eq: vecchia_approx}) with possibly incorrectly specified parameters $(\phi, \sigma) = (\phi_1, \sigma_1)$. A consequence of \eqref{eq: thm1_cond2} or \eqref{eq: thm1_cond3} is that $\displaystyle {E_1\tilde{e}_{i,1}^2/E_0 e_{i,0}^2} \to 1$ as $i$ (and hence $n$) is large.
\begin{comment}
	\begin{equation*}
		\frac{E_1\tilde{e}_{i,1}^2}{E_0 e_{i,0}^2} \to 1, \quad \mbox{as $i$ (and hence $n$) is large}.
	\end{equation*}
\end{comment}
Hence, \eqref{eq: thm1_cond2} or \eqref{eq: thm1_cond3} implies asymptotic normality of estimates as well as predictive consistency. %In the next section, we provide simple conditions on the sampled locations $\chi$ in one dimension so that \eqref{eq: thm1_cond2}, and \eqref{eq: thm1_cond3} hold.

%------------------------------------------------------------------------------------------
\section{Infill Asymptotics for Vecchia's Approximation on the line}\label{sec: Matern_example} %$\mathbb{R}$
It is possible to obtain further insights into Theorem~\ref{thm:cond_matern} when considering the asymptotic normality of $\hat{\sigma}_{n, vecch}$ for Mat\'ern models with observations on the real line. Whilst the conditions (\ref{eq: thm1_cond2})~and~(\ref{eq: thm1_cond3}) are, in general, analytically intractable due to the presence of $E_0 \tilde{e}^2_{i,1}$, we will show that \eqref{eq: thm1_cond3} holds for Mat\'ern models on $\mathbb{R}$.  

To simplify the presentation, we consider the fixed domain $\mathcal{D} = [0,1]$, and the sampled locations $\chi = \{i/n: 0 \le i \le n\}$. Denote $\delta = 1/n$ for the spacing of $\chi$, and $y_i = y(i \delta)$, $0 \le i \le n$ for the observations. We define $S_{(i)} = S_{(i)}[k]: = (y_i, y_{i-1}, \ldots, y_{i-k+1})$ for a positive integer $k$, where $S_{(i)}[k]$ is the vector of $k$ consecutive observations backward from $y_i$. %\textcolor{blue}
{The integer $k$ is capped by $i$ since $S_{(i)}[k]$ is a subvector of $y_{(i)}$.}
\begin{assumpt}\label{assumpt1}
	Let $\mathcal{D} = [0,1]$, and $\chi = \{i \delta: 0 \le i \le n\}$ with $\delta = 1/n$.
	Then
	\begin{equation}\label{eq:miscomp}
		\sum_{i = 1} ^n \frac{E_1(e_{i,1} - e_{i,0})^2}{E_1 e_{i,1}^2} = \mathcal{O}(1).
	\end{equation}
\end{assumpt}

Before stating the main result on $\mathbb{R}$, we demonstrate why this assumption is reasonable.  We empirically investigate $\sum_{i = 1} ^n E_1(e_{i,1} - e_{i,0})^2/E_1 e_{i,1}^2$ for increasing values of $n$. Figure~\ref{fig: assump_check} plots the values of $\sum_{i = 1} ^n E_1(e_{i,1} - e_{i,0})^2/E_1 e_{i,1}^2$ with $\chi = \{i\delta: 0 \le i \le n\}$ for $\nu = 0.25, 0.5, 1.0, 1.5, 2.0$, and $n$ ranging from $100$ to $1200$. As $n$ increases, the plot tends to flatten as is suggested by the assumption. 
\begin{figure}[htb]
	\centering
	\includegraphics[width = 0.75\linewidth]{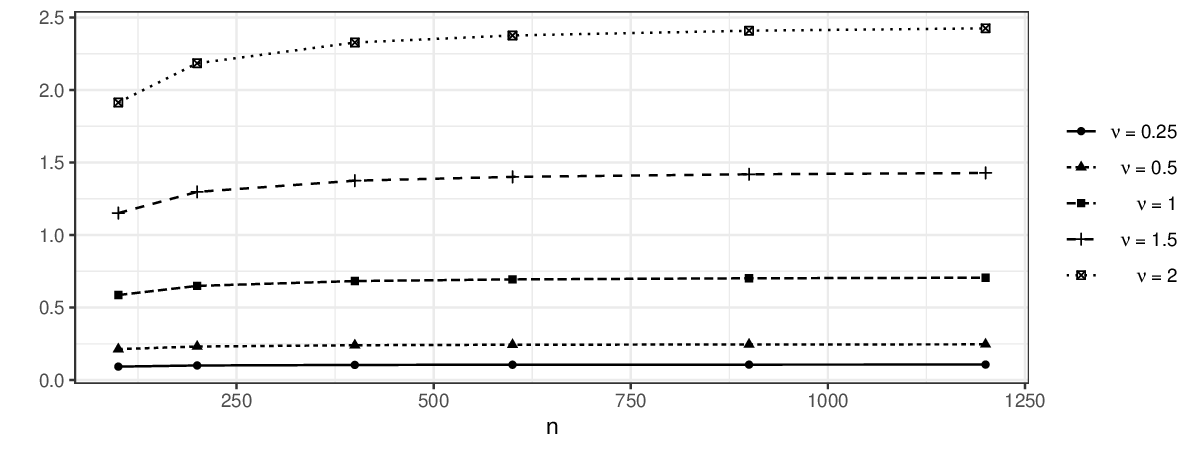}
	\caption{ Trend of $\sum_{i = 1} ^n E_1(e_{i,1} - e_{i,0})^2/E_1 e_{i,1}^2$ for Mat\'ern model when $\chi$ is a regular grid $\chi = \{i \delta: 0 \le i \le n\}$. Parameter $\sigma^2$ in Mat\'ern covariogram equals $1.0$ and decay $\phi$ for different $\nu$ are set to make the correlation of two points equals 0.05 when their distance reaches 0.2. \label{fig: assump_check}}
\end{figure}

Some additional explanation is also possible from a theoretical viewpoint. Since $P_0$ and $P_1$ are equivalent, Corollary 3.1 of \cite{Stein90} implies that 
$E_1(e_{i,1} - e_{i,0})^2 / E_1 e_{i,1}^2 \to 0$ as $n, i \to \infty$.
By stationarity and symmetry of the Mat\'ern model, 
$e_{i,j}$ is distributed as the error of the least square estimate of $y_0:= y(0)$ given observations $y_{(i)} = (y_1, \ldots, y_i)^{\T}$. Hence, $e_{i,j}$ can be realized as $y_0 - E_j(y_0 \given y_{(i)})$. Similarly, $\tilde{e}_{i,j}$ can be realized as $y_0 - E_j(y_0 \given S_{(i-1)})$.
Now consider the infinitely sampled locations $\{i \delta: i \ge 0\}$, and extend the finite sample $Y_n: = (y_0, \ldots, y_n)^{\T}$ to $Y: = (y_0, y_1, \ldots)^{\T}$ with $y_i : = y(i \delta)$. %\textcolor{blue}
{The sampled locations of $Y$ form an infinite grid on $[0, \infty)$, and $\delta = 1/n$ is determined based on the sample size of $Y_n$.} Let $e_{\infty, j}$ be the error $y_0 - E_j(y_0 \given y_1, \ldots)$ for the infinite sequence $Y$.
For $f_0$ (resp. $f_1$) the spectral density under $P_0$ (resp. $P_1$), it is easily seen that $f_0, f_1 \sim C \sigma_0^2 \phi_0^{\nu} u^{-2 \nu - 1}$ as $u \rightarrow \infty$, and $(f_1 - f_0)/f_0 \asymp u^{-2}$. 
Therefore, by Theorem 2 of \cite{Stein99},
\begin{equation}
	\label{eq:miscomp2}
	\frac{E_1(e_{\infty,1} - e_{\infty,0})^2}{E_1 e_{\infty,1}^2} = O(\delta^{\min(2\nu+1,4)} \log(\delta^{-1})^{1(\nu = 3/2)}))\;.
\end{equation}
Intuitively, $e_{i,j} \approx e_{\infty, j}$ for large $i$. 
It will not be unreasonable to speculate a stronger result where \eqref{eq:miscomp2} still holds by replacing $e_{\infty, j}$ with $e_{i,j}$ for large $i$, which would imply \eqref{eq:miscomp}. As indicated on p.138 of \cite{Steinbook}, obtaining the rate of $E_1(e_{i,1} - e_{i,0})^2 / E_1 e_{i,1}^2$ for any bounded domain $\mathcal{D}$ is a highly non-trivial task. The only known results are obtained in \cite{Stein90b, Stein99} for $\nu = \frac{1}{2}, \frac{3}{2}, \ldots$, which also imply \eqref{eq:miscomp}.
In fact, we need that $E_1e_{i,1}^2 = E_1e_{\infty,1}^2 (1 + \mathcal{O}(i^{-\kappa}))$ for any $\kappa > 0$. Hence, the only missing piece in the above heuristics is an estimate of $E_1 e_{i,0}^2 / E_1 e_{\infty, 0}^2$, which we do not explore further here. Our result is stated as follows.
\begin{theorem}\label{thm:main2}
	Let $\mathcal{D} = [0,1]$, $\chi = \{i \delta: 0 \le i \le n\}$ and $S_{(i)} = S_{(i)}[n^{\epsilon}]$ for $\epsilon \in (0,1)$.
	If (\ref{eq:miscomp}) holds, then \eqref{eq: thm1_cond3}  also holds. Consequently, \eqref{eq: thm1_vecch_MLE_asym_distr} holds for the Mat\'ern model ($\nu > 0$).\end{theorem}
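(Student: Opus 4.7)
The plan is to verify each of the two summability conditions in \eqref{eq: thm1_cond3} by inserting $e_{i,1}$ between $\tilde{e}_{i,1}$ and $e_{i,0}$, so that the discrepancy $\tilde{e}_{i,1} - e_{i,0} = (\tilde{e}_{i,1} - e_{i,1}) + (e_{i,1} - e_{i,0})$ splits into a ``mis-specification'' part involving $e_{i,1} - e_{i,0}$ and a ``truncation'' part involving $\tilde{e}_{i,1} - e_{i,1}$. The mis-specification part will be controlled by the equivalence $P_0 \equiv P_1$ from \cite{Zhang04} together with Assumption~\ref{assumpt1}; the truncation part will be controlled by a screening estimate for the Mat\'ern process sampled on $\delta \mathbb{Z}$, tuned to the choice $k = n^\epsilon$.

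First I would use $P_0 \equiv P_1$ with the characterization of equivalence already applied in \eqref{eq:equivPP} to obtain $\sum_{i=1}^\infty (E_1 e_{i,1}^2/E_0 e_{i,0}^2 - 1)^2 < \infty$, which in particular yields a uniform bound on $E_0 e_{i,0}^2/E_1 e_{i,1}^2$. Combined with Assumption~\ref{assumpt1}, this produces $\sum_i E_1(e_{i,1} - e_{i,0})^2/E_0 e_{i,0}^2 = \mathcal{O}(1)$, taking care of the mis-specification contribution to the first condition of \eqref{eq: thm1_cond3}. The same uniform bound will convert any estimate expressed with denominator $E_1 e_{i,1}^2$ into one with denominator $E_0 e_{i,0}^2$.

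Next, for the truncation part, I would use the fact that $\tilde{e}_{i,1}$ is the $P_1$-residual of projecting $y_i$ onto $\mathrm{span}(S_{(i-1)}) \subset \mathrm{span}\{y_1,\ldots,y_{i-1}\}$. Orthogonality under $P_1$ then yields
\[
E_1(\tilde{e}_{i,1} - e_{i,1})^2 = E_1 \tilde{e}_{i,1}^2 - E_1 e_{i,1}^2 \ge 0,
\]
which vanishes for $i \le n^\epsilon + 1$ by the choice of $S_{(i-1)}$. For $i > n^\epsilon$, stationarity of the Mat\'ern process on $\delta\mathbb{Z}$ gives $E_1 \tilde{e}_{i,1}^2 = v_{n^\epsilon}$, constant in $i$, while $E_1 e_{i,1}^2 \ge v_\infty$ with $v_\infty$ the innovation variance of the stationary predictor based on the infinite past; hence $E_1(\tilde{e}_{i,1} - e_{i,1})^2 \le \eta(n^\epsilon)$ uniformly in $i$, with $\eta(k) := v_k - v_\infty$. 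Summing over $i$ gives the bound $n\,\eta(n^\epsilon)$, which must be shown to be $\mathcal{O}(1)$. An analogous decomposition reduces the second condition of \eqref{eq: thm1_cond3} to $\sum_i (E_1 e_{i,1}^2/E_0 e_{i,0}^2 - 1)^2$ (already $\mathcal{O}(1)$) plus terms dominated by the same screening quantity times a bounded ratio.

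The main obstacle is thus the quantitative screening estimate $n\,\eta(n^\epsilon) = \mathcal{O}(1)$ for the Mat\'ern process on a regular grid. For Mat\'ern with half-integer $\nu$, the sampled process is a finite-order autoregression, so $\eta(k) \equiv 0$ for $k$ sufficiently large and the bound is immediate. For general $\nu > 0$, I would argue that the folded discrete-time spectral density $g(\omega) = \delta^{-1} \sum_{j \in \mathbb{Z}} f((\omega + 2\pi j)/\delta)$, with $f$ as in \eqref{eq:spden} for $d=1$, extends analytically to a horizontal strip in the complex plane (the nearest singularities being at $\omega = \pm i\phi\delta$); standard spectral-factorization (Szeg\H{o}--Baxter type) arguments then yield exponential decay of the one-step-ahead prediction coefficients and hence of $\eta(k)$ in $k$, which amply gives $n\,\eta(n^\epsilon) \to 0$ for every $\epsilon \in (0,1)$. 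The delicate remaining step is transferring this infinite-past screening bound to the finite-sample quantity $E_1(\tilde{e}_{i,1} - e_{i,1})^2$ uniformly in $i \le n$, which parallels the non-trivial estimates discussed around p.~138 of \cite{Steinbook} and in \cite{Stein90, Stein99}.
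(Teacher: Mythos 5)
Your skeleton is essentially the paper's: you split $\tilde e_{i,1}-e_{i,0}$ into a truncation part and a mis-specification part, control the latter with Assumption~\ref{assumpt1} plus boundedness of variance ratios, and reduce the theorem to a quantitative screening estimate for the grid-sampled Mat\'ern process. (The paper pivots on the infinite-past residual $e_{\infty,1}$ rather than on $e_{i,1}$, but these carry the same information, since by your own Pythagoras identity $E_1(\tilde e_{i,1}-e_{i,1})^2 = E_1(\tilde e_{i,1}-e_{\infty,1})^2 - E_1(e_{i,1}-e_{\infty,1})^2$.) The genuine gap is exactly at the step you call the main obstacle, and the route you propose for it fails. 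You note yourself that the nearest singularities of the folded density sit at $\omega=\pm i\phi\delta$, so the analyticity strip has half-width proportional to $\delta$; spectral factorization in such a strip gives prediction coefficients decaying like $e^{-c\phi\delta m}$, hence $\eta(k)\lesssim \delta^{2\nu}e^{-c\phi\delta k}$. In the infill regime $\delta=1/n$ and $k=n^{\epsilon}$, so $\delta k=n^{\epsilon-1}\to 0$ and the exponential factor tends to $1$: the bound is vacuous. Moreover your stated target, $n\,\eta(n^{\epsilon})=\mathcal{O}(1)$, is itself too weak, because the denominators in \eqref{eq: thm1_cond3} satisfy $E_0e_{i,0}^2\asymp \delta^{2\nu}=n^{-2\nu}$; what is actually required is $n\,\eta(n^{\epsilon})/\delta^{2\nu}=\mathcal{O}(1)$, i.e.\ decay of $\eta(k)/\delta^{2\nu}$ in $k$ of polynomial order at least $1/\epsilon$ with constants \emph{uniform in $\delta$}. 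This $\delta$-uniformity is the whole difficulty of the fixed-domain setting, and exponential decay at a $\delta$-degenerate rate does not supply it. The paper's Proposition~\ref{lem:key} obtains precisely the needed estimate from Baxter's inequality in its moment (not analytic) form: the $\kappa$-th derivative of the folded spectral density is integrable with a bound independent of $\delta<1$ (see \eqref{eq:momentest}), giving $E_1(\tilde e_{i,1}-e_{\infty,1})^2\le C_{\kappa}\delta^{2\nu}\min(i,n^{\epsilon})^{-\kappa}$ for every $\kappa>0$ (see \eqref{eq:differror}), and the choice $\kappa=2/\epsilon$ then closes both conditions of \eqref{eq: thm1_cond3}.

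Two further problems. First, your appeal to $P_0\equiv P_1$ through the Feldman--H\'ajek/Shiryaev criterion to conclude $\sum_{i}(E_1e_{i,1}^2/E_0e_{i,0}^2-1)^2<\infty$, and from it a uniform ratio bound, is not valid as stated: that criterion applies to a single fixed infinite sequence, whereas here $y_i=y(i\delta)$ with $\delta=1/n$ is a triangular array, so equivalence gives a finite sum for each fixed $n$ with no control uniform in $n$ --- the same pitfall as above, and the reason the paper derives ratio control quantitatively, via $E_0e_{i,0}^2\sim C_0\delta^{2\nu}(1+\mathcal{O}(i^{-\kappa}))$ and $E_1\tilde e_{i,1}^2\sim C_0\delta^{2\nu}(1+\mathcal{O}(\min(i,n^{\epsilon})^{-\kappa}))$ with the \emph{same} constant $C_0$ (this is where $\sigma_1^2\phi_1^{2\nu}=\sigma_0^2\phi_0^{2\nu}$ enters), rather than from qualitative equivalence. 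Second, your side remark that half-integer $\nu$ makes the sampled process a finite-order autoregression holds only for $\nu=1/2$; for $\nu=3/2,5/2,\ldots$ the sampled process is ARMA (the continuous-time process is Markov only jointly with its derivatives), so $\eta(k)>0$ for all $k$, and the trivial case is just the Ornstein--Uhlenbeck one already discussed after Theorem~\ref{thm:main2}.
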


We make a few remarks before presenting a proof. Theorem~\ref{thm:main2} states that the asymptotic normality of the microergodic parameter $\sigma^2 \phi^{2 \nu}$ still holds under Vecchia's approximation in a neighborhood of %\textcolor{blue}
{at most} size $k = n^{\epsilon} \ll n$ (sample size), where the computation of $\hat{\sigma}^2_{n, vecch}$ is much cheaper.
This justifies the validity of Vecchia's approximation for Mat\'ern models from a fixed-domain perspective. The range $n^{\epsilon}$ may not be optimal, and it might be possible to improve to $k = \mathcal{O}(\log n)$. However, we do not pursue this direction here from a theoretical standpoint. A simulation study is provided for the case of $k = \mathcal{O}(\log n)$ in Section~\ref{subsec: sim_1}.

An interesting situation arises with $\nu = 1/2$, in which the process reduces to the Ornstein-Uhlenbeck process, and 
$p(y_i \given y_{(i-1)}) = p(y_i \given y_{i-1})$. Therefore, \eqref{eq: thm1_vecch_MLE_asym_distr} trivially holds for Vechhia's approximation with a neighbor of size $k = 1 = \mathcal{O}(1)$. It is, therefore, natural to enquire whether the asymptotic normality of \eqref{eq: thm1_vecch_MLE_asym_distr} holds under Vecchia's approximation within a range $k = \mathcal{O}(1)$. 
If this is true, computational efforts can be reduced further. Unfortunately, this need not be the case. For $\nu < 1/4$ and $k = \mathcal{O}(1)$, $n^{2\nu}(\hat{\sigma}_{n, vecch}^2\phi_1^{2\nu}  - \sigma_0^2\phi_0^{2\nu})$ converges to a non-Gaussian distribution \citep{BL20}. The cases for $\nu \ge 1/4$ remain unresolved.

Now we turn to the proof of Theorem \ref{thm:main2}.
The key to this analysis is the following proposition, which relies on a result on the bound of $e_{\infty, j} - e_{i,j}$, i.e.  the difference between the errors of the finite and the infinite least square estimates \citep{Baxter62}.
The study dates back to the work of \cite{Kol41}, 
see also \cite{GS58, Ibr64, DM70, DM76, Gin99} for related discussions.

\begin{proposition}% \begin{lemma}
		\label{lem:key}
		Let $\kappa > 0$. There exist $C_0, C_{\kappa} >0$ such that for $\delta < 1$ and $j = 0,1$,
		\begin{equation}
			\label{eq:inferror}
			E_j e_{\infty, j}^2 \sim C_0 \delta^{2 \nu},
		\end{equation}
		\begin{equation}
			\label{eq:differror}
			E_j (e_{\infty, j} - e_{i,j})^2 \le C_{\kappa} \delta^{2\nu} i^{-\kappa}.
		\end{equation}
	\end{proposition}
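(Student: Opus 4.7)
The plan is to recognize both quantities as one-step prediction errors for the discrete-time stationary sequence $(y(k\delta))_{k\in\mathbb{Z}}$ under $P_j$, whose discrete-time spectral density on $[-\pi,\pi]$ is the aliased sum
\[
g_j^{(\delta)}(\lambda)=\frac{1}{\delta}\sum_{k\in\mathbb{Z}} f_j\!\left(\frac{\lambda+2\pi k}{\delta}\right),
\]
with $f_j$ the Mat\'ern spectral density in \eqref{eq:spden} under parameters $\theta_j$. The quantity $E_j e_{\infty,j}^2$ is then exactly the one-step prediction error from the infinite past, while $E_j e_{i,j}^2$ is the one-step prediction error from a window of length $i$. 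Since these are successive orthogonal projections in $L^2(P_j)$, Pythagoras immediately gives $E_j(e_{\infty,j}-e_{i,j})^2 = E_j e_{i,j}^2 - E_j e_{\infty,j}^2$, so the two estimates \eqref{eq:inferror} and \eqref{eq:differror} can be attacked separately.

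For \eqref{eq:inferror}, I would extract the $\delta^{2\nu}$ scale directly from the aliased density. Using $f_j(u)\sim C\sigma_j^2\phi_j^{2\nu}|u|^{-2\nu-1}$ at infinity and the identity $\sigma_0^2\phi_0^{2\nu}=\sigma_1^2\phi_1^{2\nu}$, a short computation gives $g_j^{(\delta)}(\lambda)=\delta^{2\nu}h_j(\lambda;\delta)$ where $h_j(\lambda;\delta)\to h^{\star}(\lambda)$ uniformly on $[-\pi,\pi]$ as $\delta\to 0$, with $h^{\star}$ positive, smooth, and independent of $j$ (it depends only on $\nu$ and on the common microergodic parameter). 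Plugging into the Kolmogorov-Szeg\H{o} formula
\[
E_j e_{\infty,j}^2 = 2\pi\exp\!\left(\frac{1}{2\pi}\int_{-\pi}^{\pi}\log g_j^{(\delta)}(\lambda)\,d\lambda\right)
\]
and pulling out $\log\delta^{2\nu}$ yields $E_j e_{\infty,j}^2\sim C_0\,\delta^{2\nu}$ with the universal constant $C_0 = 2\pi\exp\bigl((2\pi)^{-1}\!\int\log h^{\star}\bigr)$ by dominated convergence.

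For \eqref{eq:differror}, the engine is Baxter's inequality (as used by \cite{Baxter62}) applied to the normalized spectral density $h_j(\cdot;\delta)$. Because the Mat\'ern spectral density is a rational function of $u^2$, the aliased sum $h_j(\lambda;\delta)$ is real-analytic in $\lambda$ on a complex neighborhood of the unit circle and bounded uniformly away from $0$ and $\infty$ as $\delta\to 0$. Baxter's inequality then bounds the $\ell^1$ gap between the finite and the infinite one-sided prediction coefficients by $Cr^i$ for some $r\in(0,1)$ and some $C$, both uniform in small $\delta$. Re-inserting the prefactor $\delta^{2\nu}$ gives $E_j e_{i,j}^2-E_j e_{\infty,j}^2 \le C'\delta^{2\nu}r^i$, which majorizes $C_\kappa\delta^{2\nu}i^{-\kappa}$ for any $\kappa>0$, proving \eqref{eq:differror}.

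The main obstacle is tracking uniformity in $\delta\to 0$: the rate constants in both Szeg\H{o}'s formula and Baxter's inequality must not blow up as the grid refines. This reduces to two analytic facts about the aliased density: (i) after the natural rescaling by $\delta^{2\nu}$, the functions $h_j(\cdot;\delta)$ converge uniformly to a positive, smooth limit driven solely by $\sigma_j^2\phi_j^{2\nu}$; and (ii) the pre-limit and the limit share a common strip of analyticity around the unit circle, so that the constants $r$ and $C$ in Baxter's inequality can be chosen independent of $\delta$. Both properties follow from a direct but careful estimate of the aliased sum, bounding the tail $k\neq 0$ terms by a convergent series of $|\lambda+2\pi k|^{-2\nu-1}$-type quantities and controlling the $k=0$ term via the explicit form of $f_j$ in \eqref{eq:spden}.
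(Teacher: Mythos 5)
Your treatment of \eqref{eq:inferror} follows the same route as the paper (fold the spectral density, apply the Kolmogorov--Szeg\H{o} formula, pull out $\delta^{2\nu}$; the paper does this by citing p.~80, eq.~(17) of \cite{Steinbook}), and its conclusion is correct, but your stated justification is not: the rescaled folded density does \emph{not} converge uniformly to a smooth, bounded limit. Writing it out in $d=1$,
\begin{equation*}
\delta^{-2\nu}\,\overline{f}^{\delta}_j(\lambda)
= C\sigma_j^2\phi_j^{2\nu}\sum_{k\in\mathbb{Z}}\bigl(\phi_j^2\delta^2+(\lambda+2\pi k)^2\bigr)^{-(\nu+\frac12)},
\end{equation*}
which is of order $\delta^{-2\nu-1}$ at $\lambda=0$, and its pointwise limit $C\sigma_0^2\phi_0^{2\nu}\sum_{k}|\lambda+2\pi k|^{-2\nu-1}$ blows up like $|\lambda|^{-2\nu-1}$ at the origin; so your limit $h^{\star}$ is neither smooth nor bounded, and the convergence cannot be uniform. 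The Szeg\H{o}-formula step survives only because the singularity is logarithmically integrable, so the log-integrals still converge (by monotone convergence, since the folded sum increases as $\delta$ decreases); that is the repair, and it is in effect what the paper's citation accomplishes.

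The genuine gap is in your proof of \eqref{eq:differror}, where three ingredients are false. (a) The Mat\'ern spectral density \eqref{eq:spden} is rational in $u^2$ only when $\nu+\tfrac12$ is an integer, whereas the proposition must cover every $\nu>0$. (b) The rescaled folded density is not uniformly bounded above, by the $\delta^{-2\nu-1}$ spike just described, so there is no two-sided bound uniform in $\delta$. (c) Most importantly, there is no common strip of analyticity: the Fourier coefficients of $\overline{f}^{\delta}_j$ are the autocovariances $K_{\theta_j}(m\delta)$, which decay like $e^{-\phi_j\delta m}$, so the annulus of analyticity has width of order $\delta$ and the rate in Baxter's exponential inequality is $r(\delta)=e^{-c\delta}\to 1$. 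This cannot be patched: even granting a bound $C\,r(\delta)^i$, taking $i\asymp 1/\delta$ gives $r(\delta)^i\asymp 1$ while \eqref{eq:differror} demands $i^{-\kappa}\asymp\delta^{\kappa}\to 0$; an exponential rate that degenerates as the grid refines can never deliver a polynomial bound uniform in $\delta<1$. The paper avoids exactly this trap: it uses Baxter's identity \eqref{eq:keyiden} to convert the error difference into the tail sum $\sum_{m\ge i}|\phi_{m,j}(0)|^2$, and then invokes Baxter's Theorem~2.3 in its \emph{polynomial-weight} form \eqref{eq:momentphi}, whose hypothesis is a bound on the $\kappa$-th moment of the Fourier coefficients of the folded density --- a finite-smoothness condition, verified through the derivative estimate \eqref{eq:momentest} that splits off the troublesome $\ell=0$ (near-origin) term. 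To complete your argument you would have to abandon the analyticity/exponential machinery and use this finite-smoothness version of Baxter's theorem; all of the difficulty concentrates precisely in the near-origin spike that your uniformity assumptions (i) and (ii) assume away.
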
%\end{lemma}
	\begin{proof}
		From the discussion below \eqref{eq:miscomp} that $e_{i,j}$ and $e_{\infty, j}$ can  be realized as $e_{i,j} = y_0 - E_j(y_0 \given y_1, \ldots, y_i)$ and $e_{\infty,j} = y_0 - E_j(y_0 \given y_1, y_2, \ldots)$,
		where $y_i: = y(i \delta)$ is indexed by nonnegative integers, we know from \citet[p.77]{Steinbook} that the spectral density of $Y$ under $P_j$ is
		\begin{equation*}
			\overline{f}^{\delta}_j(u) = \frac{1}{\delta} \sum_{\ell = -\infty}^{\infty} f_j\left(\frac{u + 2 \pi \ell}{\delta}\right) \quad \mbox{for } u \in (- \pi, \pi], \quad j = 0, 1\;,
		\end{equation*}
		where $f_j$ is the spectral density defined by \eqref{eq:spden} corresponding to $P_j$. For $j = 0, 1$, $f_j(u) \sim C \sigma_0^2 \phi_0^{2 \nu} u^{-2 \nu -1}$ as $u \to \infty$. From \citet[p.80, (17)]{Steinbook}, we obtain
		\begin{equation*}
			E_j e_{\infty, j}^2 \sim 2 \pi C \sigma_0^2 \phi_0^{2 \nu} \delta^{2 \nu} \exp\left( \frac{1}{2 \pi} \int_{- \pi}^{\pi} \log \left(\Sigma_{\ell = -\infty}^{\infty}|u + 2 \pi \ell|^{-2 \nu -1} \right) du\right),
		\end{equation*}
		which implies \eqref{eq:inferror} with 
		\[
		C_0 = 2 \pi C \sigma_0^2 \phi_0^{2 \nu} \exp\left( \frac{1}{2 \pi} \int_{- \pi}^{\pi} \log \left(\Sigma_{\ell = -\infty}^{\infty}|u + 2 \pi \ell|^{-2 \nu -1} \right) du\right)\;.
		\]
		
		Turning to \eqref{eq:differror}, we know from \citet[p.142, (15)]{Baxter62} that
		\begin{equation}\label{eq:keyiden}
			E_j(e_{i,j} - e_{\infty,j})^2 = E_j e_{\infty, j}^2 \, E_j e_{i,j}^2 \sum_{m = i}^{\infty}|\phi_{m,j}(0)|^2,
		\end{equation}
		where $\phi_{m,j}(\cdot)$ are the Szeg\"{o} polynomials associated with the spectral $\overline{f}^{\delta}_j$ \citep[see Section~2.1 of][for background]{GS58}. Note that $E_j e_{\infty, j}^2 \sim C_0 \delta^{2 \nu}$ and $E_j e_{i,j}^2 \le E_j e_{1,j}^2 \to 0$ as $\delta \to 0$. It will be sufficient to establish 
		\begin{equation}
			\label{eq:momentphi}
			\sum_{m = 0}^{\infty} m^{\kappa} |\phi_{m,j}(0)| \le D_{\kappa} \quad \mbox{for some } D_{\kappa} > 0,
		\end{equation}
		in which case the identity \eqref{eq:keyiden} will imply \eqref{eq:differror}. The key observation of \cite{Baxter62} (Theorem 2.3) is that \eqref{eq:momentphi} holds if 
		the $\kappa^{th}$ moment of the Fourier coefficients associated with $\overline{f}^{\delta}_j$ is bounded from above by $D'_{\kappa}$ for some $D'_{\kappa} > 0$, i.e.
		\begin{equation*}
			\sum_{m = 0}^{\infty} m^{\kappa} |c_{m,j}| < D_{\kappa}', \quad \mbox{where } c_{m,j}: = \frac{1}{2 \pi} \int_{- \pi}^{\pi} \overline{f}^{\delta}_j(u) e^{-inu}du.
		\end{equation*}
		A sufficient condition for the latter to hold is that the $\kappa^{th}$ derivative of $\overline{f}^{\delta}_j$ is integrable, and 
		$\int_{-\pi}^{\pi} \left| \frac{d^{\kappa}}{du^{\kappa}} \overline{f}^{\delta}_j(u) \right| du \le D''_{\kappa}$,
		for some $D''_{\kappa} > 0$ which does not depend on $\delta < 1$. Breaking the sum of $\overline{f}^{\delta}_j$ according to $\ell = 0$ and $\ell \ne 0$ produces
		\begin{equation}\label{eq:momentest}
			\int_{-\pi}^{\pi} \left| \frac{d^{\kappa}}{du^{\kappa}} \overline{f}^{\delta}_j(u) \right| du
			\le A \int_{-\infty}^{\infty} (1 + u^{2\nu + 1 + \kappa})^{-1} du + A' \delta^{2 \nu} \sum_{\ell \ne 0} l^{-2\nu-1-\kappa}\;,
		\end{equation}
		where $A, A' > 0$ are numerical constants. Hence, the right hand side of \eqref{eq:momentest} is bounded by $D''_{\kappa} = A \int_{-\infty}^{\infty}(1 + u^{2\nu + 1 + \kappa})^{-1} du + A'  \sum_{\ell \ne 0} l^{-2\nu-1-\kappa}$, which depends only on $\kappa$.
	\end{proof}
	
	\begin{proof}[Proof of Theorem~\ref{thm:main2}]
		We fix $\kappa = 2/ \epsilon$, and use $C^{(1)}_{\kappa}, C^{(2)}_{\kappa}, \ldots$ to denote constants depending only on $\kappa$.
		Note that $E_0e_{i,0}^2 = E_0 e_{\infty, 0}^2 + E_0(e_{\infty,0} - e_{i,0})^2$, since $e_{\infty,0}$ and $e_{\infty,0} - e_{i,0}$ are independent under $P_0$.
		By Proposition \ref{lem:key}, we get $E_0 e_{i,0}^2 \sim C_0 \delta^{2 \nu} (1 + C^{(1)}_{\kappa} i^{-\kappa})$. Similarly, $E_1 \tilde{e}^2_{i,1} = E_1 e_{\infty,1}^2 + E_1(\tilde{e}_{i,1} - e_{\infty,1})^2 \sim C_0 \delta^{2 \nu} (1 + C^{(2)}_{\kappa} \min(i, n^{\epsilon})^{-\kappa})$,
		because $\tilde{e}_{i,1}$ is realized as $y_0 - E_1\left(y_0 \given S_{(i-1)}[k]\right)$ with $k = \min(i, n^{\epsilon})$. Therefore,
		\begin{align*}
			\sum_{i = 1}^n \left(\frac{E_1 \tilde{e}^2_{i,1}}{E_0 e_{i,0}^2} - 1 \right)^2 & \le \sum_{i \ge n^{\epsilon}} \left(\frac{ C^{(3)}_{\kappa} n^{-\epsilon \kappa}}{1 + C^{(1)}_{\kappa} i^{-\kappa}}\right)^2 + \sum_{i = 1}^n \left( \frac{ C^{(4)}_{\kappa} i^{-\kappa}}{1 + C^{(1)}_{\kappa} i^{-\kappa}} \right)^2 \\
			& \le \frac{C^{(5)}_{\kappa}}{n^3} + C^{(6)}_{\kappa} \sum_{i = 1}^n i^{-2 \kappa} \le C_{\kappa}^{(7)}.
		\end{align*}
		Moreover, we have
		\begin{equation*}
			\frac{E_1(\tilde{e}_{i, 1} - e_{i, 0})^2}{E_0e_{i, 0}^2} \le 3 \frac{E_1(\tilde{e}_{i,1} - e_{\infty,1})^2}{E_0e_{i, 0}^2}
			+3 \frac{E_1(e_{i,1} - e_{\infty,1})^2}{E_0e_{i, 0}^2} + 3 \frac{E_1(e_{i,1} - e_{i,0})^2}{E_1 e_{i,1}^2} \frac{E_1 e_{i,1}^2}{E_0e_{i, 0}^2}.
		\end{equation*}
		By the same argument as above, for the first two terms:
		\begin{equation*}
			\sum_{i = 1}^n \frac{E_1(\tilde{e}_{i,1} - e_{\infty,1})^2}{E_0e_{i, 0}^2} < C^{(8)}_{\kappa} \quad \mbox{and} \quad
			\sum_{i=1}^n \frac{E_1(e_{i,1} - e_{\infty,1})^2}{E_0e_{i, 0}^2}  < C^{(9)}_{\kappa}.
		\end{equation*}
		For the last term,
		\begin{equation*}
			\sum_{i=1}^n \frac{E_1(e_{i,1} - e_{i,0})^2}{E_1 e_{i,1}^2} \frac{E_1 e_{i,1}^2}{E_0e_{i, 0}^2} 
			\le C^{(10)}_{\kappa} \sum_{i=1}^n \frac{E_1(e_{i,1} - e_{i,0})^2}{E_1 e_{i,1}^2},
		\end{equation*}
		which converges because of (\ref{eq:miscomp}). This establishes \eqref{eq: thm1_cond3} and, hence, \eqref{eq: thm1_vecch_MLE_asym_distr} follows.
	\end{proof}
	
	%------------------------------------------------------------------------------------------
	\section{Simulations%: Numerical illustration of Theorem~\ref{thm:main2}
	}\label{subsec: sim_1}%\label{sec: simulation}
	%We develop the simulation study in two parts. In the first part, we use simulation study to extend the discussion of the condition in Theorem~\ref{thm:main2}. In the second part, we focus on the implementation of theoretical asymptotic results on practical data analyses. 
	
	%\subsection{Numerical illustration of Theorem~\ref{thm:main2}}
	%We show in the proof of Theorem~\ref{thm:cond_matern} that condition \eqref{eq: cond_to_prove} is sufficient for the convergence in law \eqref{eq: thm1_vecch_MLE_asym_distr} to hold. 
	Based on \eqref{eq: cond_to_proveA}~and~\eqref{eq: cond_to_proveB} provided in Theorem~\ref{thm:cond_matern}, Theorem~\ref{thm:main2} has proved that
	\begin{equation}\label{eq: sim1_value}
		c_n(\phi_1, \phi_0, k) = \frac{1}{\sqrt{n}} \left[ 
		\sum_{i = 1}^{n} \frac{\tilde{e}_{i, 1}^2}{E_1\tilde{e}_{i, 1}^2} - \sum_{i = 1}^{n} \frac{e_{i, 0}^2}{E_0e_{i, 0}^2}\right] = o(1)\; 
	\end{equation}
	when $k = n^\epsilon$ for $\epsilon \in (0, 1)$. The equation \eqref{eq: sim1_value} induces the critical condition \eqref{eq: cond_to_prove}, resulting in the convergence in law in \eqref{eq: thm1_vecch_MLE_asym_distr}. Looking into the more challenging case $k = \mathcal{O}(\log(n))$, we extend the discussion in Theorem~\ref{thm:main2} via investigating the behaviour of $c_n(\phi_1, \phi_0, k)$ in \eqref{eq: sim1_value} for a sequence of datasets with increasing sample sizes. 
	%Our experimental setup consider 
	%\textcolor{blue}
	{Our experiments involve two data generation schemes. The first scheme considers} the study domains $\mathcal{D}_1 = [0,1]$ with $n$ observations on the grid $\chi_1 = \{i/(n - 1): 0 \leq i \leq n-1\}$ and $\mathcal{D}_2 = [0, 1]^2$ with $n = n_s^2$ observations on the grid $\chi_2 = \{ (i/(n_s - 1), j/(n_s - 1)) : 0 \leq i, \leq n_s-1, 0 \leq j \leq n_s -1\}$. %\textcolor{blue}
	{With this scheme, we generate a sequence of datasets with increasing sample size on increasingly finer grids on the study domains. The second scheme generates data on a ``disturbed grid''. On $\mathcal{D}_1 = [0,1]$ with $n$ observations, $\chi_1$ comprises locations randomly sampled by $\mbox{N}(i/(n+2), 0.15/(n+2))$ for $i = 1, \ldots, n$. On $\mathcal{D}_2 = [0, 1]^2$ with $n = n_s^2$ observations, locations in $\chi_2$ are generated by $\{N(i/(n_s + 2), 0.15/(n_s+2)), N(i/(n_s + 2), 0.15/(n_s+2))\}$ for $i, j = 1, \ldots, n_s$. With this scheme, we first generate simulations with the largest sample size, and then randomly select successively larger subsets from the same dataset to examine the tendency of $c_n(\phi_1, \phi_0, k)$ with an increasing $n$. The first scheme matches the setup of our proofs in the preceding sections, and the second scheme serves as a more directly informative regime for simulation studies about asymptotics.} %\textcolor{blue}
	{In practice, estimation using Vecchia's approximation (\ref{eq: vecchia_approx}) is complicated by the fixed ordering of locations. \cite{guinness2018permutation} has provided excellent practical insights into this issue that can considerably improve finite sample behaviour in certain settings. In this study, we test two different orderings of locations, maximin ordering and sorted
		coordinate ordering. The sorted
		coordinate ordering} orders locations on $\chi_2$ first based on the second coordinate and then break ties based on the associated first coordinate. We take $S_{(i)}[k]$ as the %\textcolor{blue}
	{at most} $k$ nearest neighbors of $y_{i+1}$. %In figure~\ref{fig: order_dim2} we illustrate the order of locations on a $5 \times 5$ grid on $[0, 1]^2$. 
	%\begin{figure}[ht]
	%	\centering
	%	\includegraphics[width = 0.7\linewidth]{}
	%	\caption{ The order of locations on a $5\times 5$ grid on $[0, 1]^2$. The points labelled by asterisk mark are the $8$ nearest neighbors of the $25$th point. \label{fig: order_dim2}}
	%\end{figure}
	In both studies on $\mathcal{D}_1$ and $\mathcal{D}_2$, we fix $\sigma^2=1.0$ and consider $5$ different smoothness values $\nu \in \{0.25, 0.5, 1.0, 1.5, 2\}$. We choose different decay parameters $\phi_0$ for different $\nu$ so that $K_{\theta}(h)=0.05$ when $h=0.2$ and $0.5$ for the study on $\mathcal{D}_1$ and $\mathcal{D}_2$, respectively. 
	
	For each fixed value of $\theta=\{\sigma^2,\phi,\nu\}$, we generate $100$ datasets with $Y_n$ being the realization from $y(s)\sim GP(0, K_{\theta}(\cdot))$ and calculate $c_n(\phi_1, \phi_0, k)$ with $k$ being the closest integer to %\textcolor{blue}
	{$3\log(n)$}%\textcolor{blue}
	{, and $\phi_1 = 1.2 \phi_0$ and $1.1 \phi_0$ for $\mathcal{D}_1$ and $\mathcal{D}_2$, respectively.} Then, we record the mean and standard deviation of the $100$ values of $c_n(\phi_1, \phi_0, k)$. We repeat this process for different values of $n$ ranging from $2^6 = 64$ to %\textcolor{blue}
	{$2^{12} = 4096$} in the study on $\mathcal{D}_1$. The study on $\mathcal{D}_2$ follows the study on $\mathcal{D}_1$ with $n_s$ ranging from %\textcolor{blue}
	{$9$ to $81$}. The code for this simulation study is available on \url{https://github.com/LuZhangstat/vecchia_consistency}. 
	%\textcolor{blue}
	{Figure~\ref{fig: sim1_1d_a}~\&~\ref{fig: sim1_1d_b} summarize the study results on $\mathcal{D}_1$ under the two data generation schemes. Each figure presents 10 different graphs, one for each value of $\nu$ and each ordering, showing the mean and standard deviation of $c_n(\phi_1, \phi_0, k)$ for different values of $n$.}
	\begin{figure}[ht]
		\begin{subfigure}[b]{\textwidth}
			\centering
			\includegraphics[width = 0.9\linewidth]{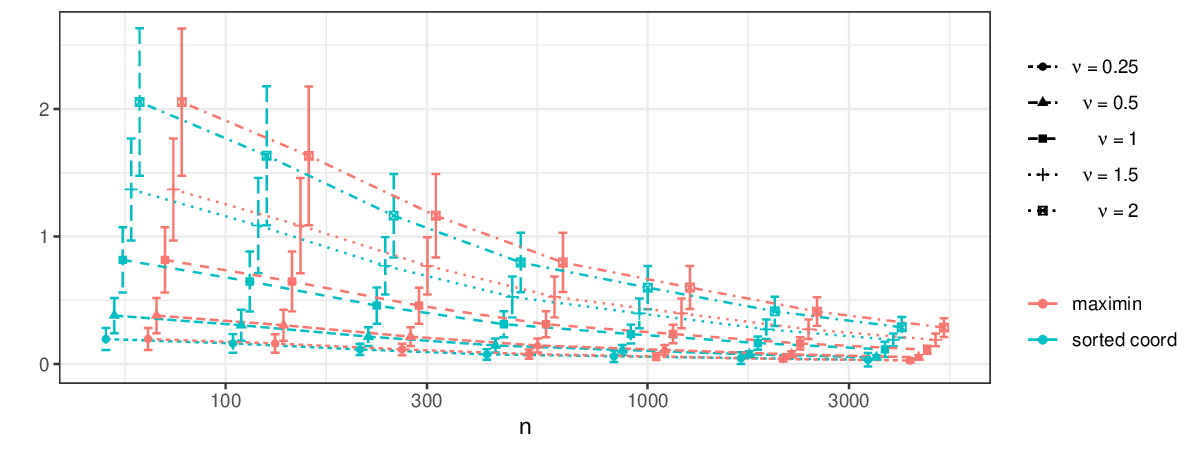}
			\caption{Increasingly finer grids (Data generation scheme 1)}
			\label{fig: sim1_1d_a}
		\end{subfigure}
		\begin{subfigure}[b]{\textwidth}
			\centering
			\includegraphics[width = 0.9\linewidth]{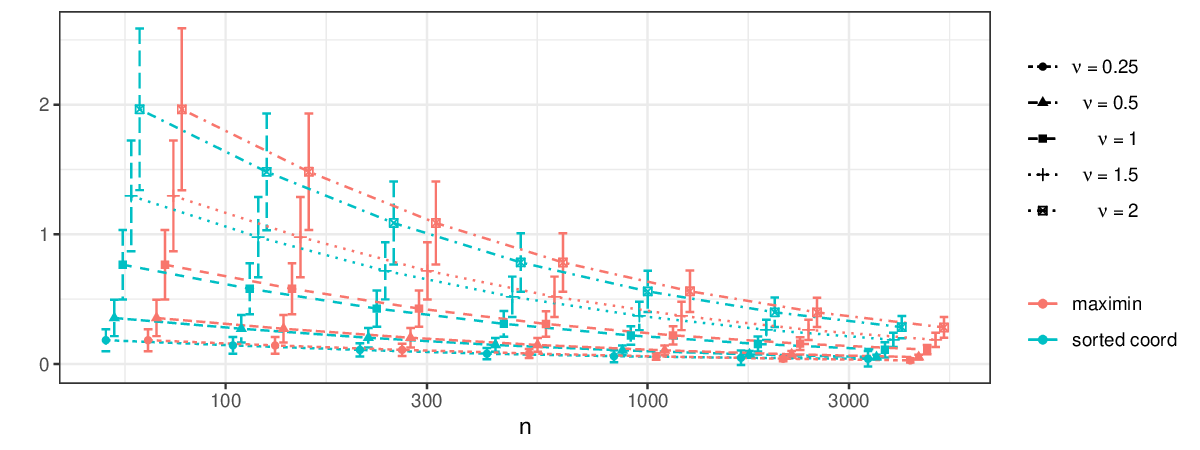}
			\caption{Successively increasing datasets (Data generation scheme 2)}
			\label{fig: sim1_1d_b}
		\end{subfigure}
		\caption{The mean of $c_n(\phi_1, \phi_0, k)$ of $100$ simulations on $\mathcal{D}_1 = [0, 1]$. The error bars represent one standard deviation. The sample size $n$ take on values in $64, 128, 256, 512, 1024, 2048$ %\textcolor{blue}
			{and $4096$.} %\textcolor{blue}
			{The graphs in red and blue show the results using maximin ordering and sorted coordinate ordering, respectively.}
			%The points and the corresponding error bars are dodged in the plot for better illustration.
			\label{fig: sim1}}
	\end{figure}
	
	The value of $c_n(\phi_1, \phi_0, k)$, as seen in Figure~\ref{fig: sim1}, decreases rapidly as the sample size increases, supporting the main conclusion in Theorem~\ref{thm:main2}. %\textcolor{blue}
	{We do not observe a strong impact of ordering and data generation scheme on the results.}
	The corresponding graphs for the study on $\mathcal{D}_2$ are presented in Figure~\ref{fig: sim1_d2}. These graphs also reveal decreasing trends, but with more gentle slopes as compared to Figure~\ref{fig: sim1}. %\textcolor{blue}
	{The results under the second data generation scheme are slightly better than those under the first scheme. When the smoothness $\nu$ is small, the standard deviation decreases faster with maximin ordering than with sorted coordinate ordering. Meanwhile, the standard deviation doesn't decrease significantly with the increase of $n$ when $\nu$ is large. %The experiment implies that $k = \mathcal{O}(\log(n))$ might not be adequate for the asymptotic result \eqref{eq: thm1_vecch_MLE_asym_distr} in Theorem~\ref{thm:cond_matern} to hold. 
	}
	\begin{figure}[ht]
		\begin{subfigure}[b]{\textwidth}
			\centering
			\includegraphics[width = 0.9\linewidth]{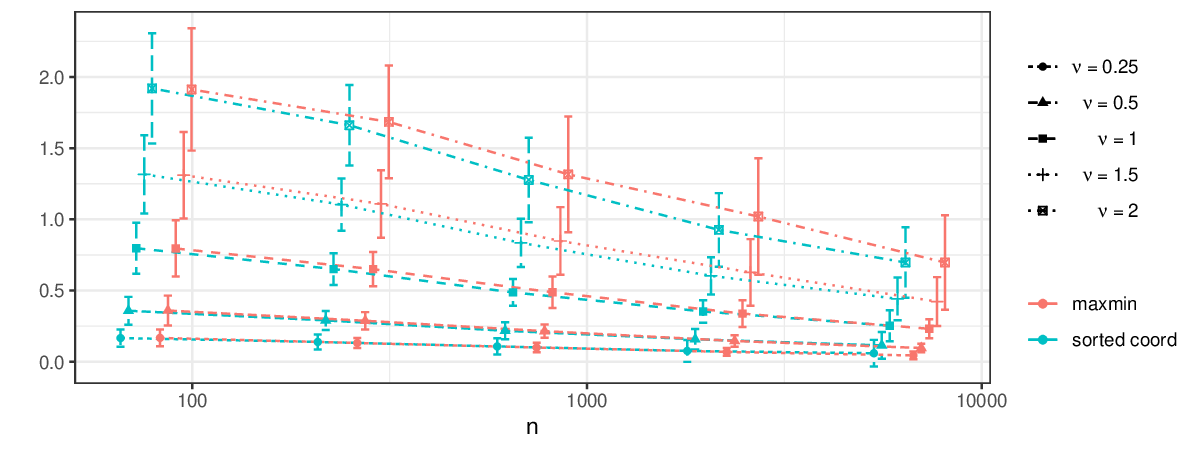}
			\caption{Increasingly finer grids (Data generation scheme 1)}
			\label{fig: sim1_d2_a}
		\end{subfigure}
		\begin{subfigure}[b]{\textwidth}
			\centering
			\includegraphics[width = 0.9\linewidth]{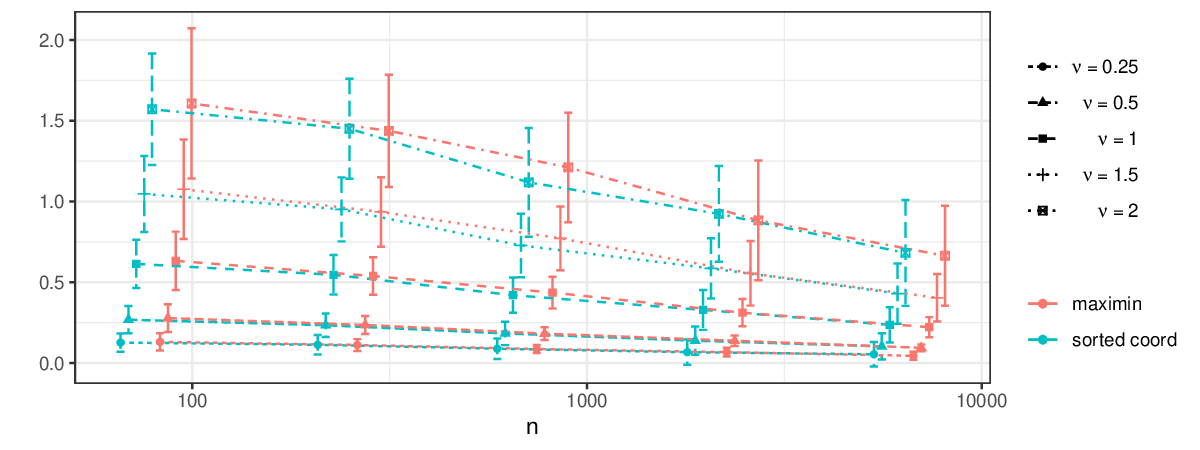}
			\caption{Successively increasing datasets (Data generation scheme 2)}
			\label{fig: sim1_d2_b}
		\end{subfigure}
		\caption{The mean of $c_n(\phi_1, \phi_0, k)$ of $100$ simulations on $\mathcal{D}_2 = [0, 1]^2$. The error bars represent one standard deviation. The sample size $n$ take on values in %\textcolor{blue}
			{$81, 256, 729, 2209$ and $6561$.} %\textcolor{blue}
			{The graphs in red and blue show the results using maximin ordering and sorted coordinate ordering, respectively. }
			%The points and the corresponding error bars are dodged in the plot for better illustration.
			\label{fig: sim1_d2}}
	\end{figure}
	%\textcolor{blue}
	{To explore further, we reproduce the study on $\mathcal{D}_2$ with $k$ being the closet integer to $\sqrt{n}$, and we illustrate the results in Figure~\ref{fig: sim1_d2_sqrt}. We observe that the standard deviation decreases rapidly as $n$ increases in all cases.}
	\begin{figure}[ht]
		\begin{subfigure}[b]{\textwidth}
			\centering
			\includegraphics[width = 0.9\linewidth]{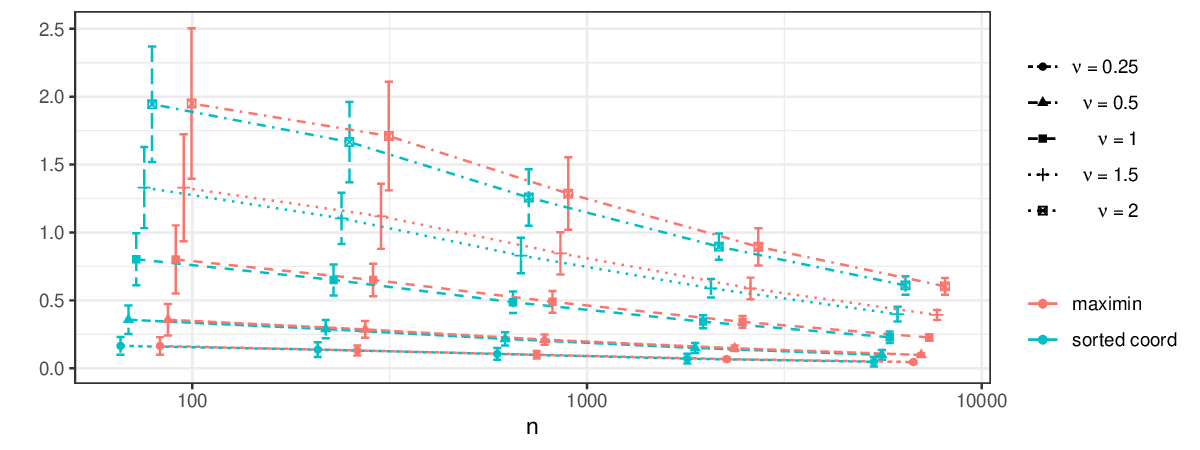}
			\caption{Increasingly finer grids (Data generation scheme 1)}
			\label{fig: sim1_d2_sqrt_a}
		\end{subfigure}
		\begin{subfigure}[b]{\textwidth}
			\centering
			\includegraphics[width = 0.9\linewidth]{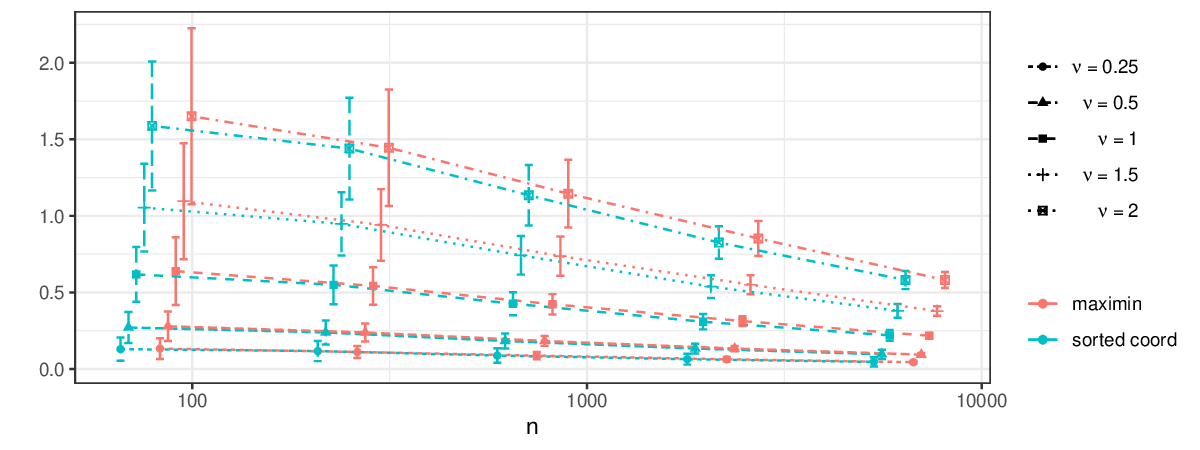}
			\caption{Successively increasing datasets (Data generation scheme 2)}
			\label{fig: sim1_d2_sqrt_b}
		\end{subfigure}
		\caption{The mean of $c_n(\phi_1, \phi_0, k)$ of $100$ simulations on $\mathcal{D}_2 = [0, 1]^2$. The error bars represent one standard deviation. The sample size $n$ take on values in %\textcolor{blue}
			{$81, 256, 729, 2209$ and $6561$.} %\textcolor{blue}
			{The graphs in red and blue show the results using maximin ordering and sorted coordinate ordering, respectively. }
			%The points and the corresponding error bars are dodged in the plot for better illustration.
			\label{fig: sim1_d2_sqrt}}
	\end{figure}
	
	We have also seen, 
	%denote the maximum likelihood estimator of $\sigma^2$ that maximizes the likelihood when $\phi_1 = \phi_0$ as $\hat{\sigma}_{0, n}^2 = \mbox{argmax}_{\sigma^2}\{ p(y; \phi_0, \sigma^2),\; \sigma^2 \in \mathbb{R}\} \; $. Then 
	from the proof in Theorem~\ref{thm:cond_matern}, that $c_n(k, \phi_1, \phi_0) = \sqrt{n}(\hat{\sigma}^2_{n, vecch} / \sigma_1^2 - \hat{\sigma}_{0, n}^2 / \sigma_0^2)$ where $\hat{\sigma}_{0, n}^2 = \mbox{argmax}_{\sigma^2}\{ p(y; \phi_0, \sigma^2),\; \sigma^2 \in \mathbb{R}^{+}\} \; $ is the maximum likelihood estimator from (\ref{eq: full_likelihood})  when fixing $\phi_1 = \phi_0$. Hence, $c_n(k, \phi_1, \phi_0)$ also measures the discrepancy between $\hat{\sigma}^2_{n, vecch} / \sigma_1^2$ and $\hat{\sigma}_{0, n}^2 / \sigma_0^2$, and the decreasing trend of $c_n(\phi_0, \phi_1, k)$ indicates that the inference based on Vecchia's approximation approaches the inference based on the full likelihood as sample size increases. This phenomenon reveals that Vecchia's approximation is still efficient when the neighbor size $k$ is substantially smaller than the sample size.

	\section{Conclusions and Future work}\label{sec: conclusion}
	
	We have developed insights into inference based on GP likelihood approximations by \cite{ve88} under fixed domain asymptotics for geostatistical data analysis. We have formally established the sufficient conditions for such approximations to have the same asymptotic efficiency as a full GP likelihood in estimating parameters in Mat\'ern covariance function. The insights obtained here will enhance our understanding of identifiability of process parameters and can also be useful for developing priors for the microergodic parameters in Bayesian modeling. The results derived here will also offer insights into formally establishing posterior consistency of process parameters for a number of Bayesian models that have emerged from (\ref{eq: vecchia_approx}) \citep{datta16a, datta16b, katzfuss2021, pbf2022}.
	
	We anticipate the current manuscript to generate further research in variants of geostatistical models. For example, it is conceivable that these results will lead to asymptotic investigations of covariance-tapered models \citep[see, e.g.,][]{wang2011fixed} and in adapting some results, such as Theorems~2~and~3 in \cite{KS13} where $\phi$ is estimated, to the approximate likelihoods presented here. Another direction of research can lead to formal developments regarding the inferential consistency of the ``nugget'' or the variance of measurement error when the spatial process has a discontinuity at $0$ arising white noise \citep{tzb2021}. Finally, there is scope to specifically investigate fixed domain inference for other likelihood approximations that extend or generalize (\ref{eq: vecchia_approx}) \citep[see, e.g.,][]{katzfuss2021, pbf2022}.

	%%%%%%%%%%%%%%%%%%%%%%%%%%%%%%%%%%%%%%%%%%%%%%
	%% Single Appendix:                         %%
	%%%%%%%%%%%%%%%%%%%%%%%%%%%%%%%%%%%%%%%%%%%%%%
	%\begin{appendix}
	%\section*{???}%% if no title is needed, leave empty \section*{}.
	%\end{appendix}
	%%%%%%%%%%%%%%%%%%%%%%%%%%%%%%%%%%%%%%%%%%%%%%
	%% Multiple Appendixes:                     %%
	%%%%%%%%%%%%%%%%%%%%%%%%%%%%%%%%%%%%%%%%%%%%%%
	%\begin{appendix}
	%\section{???}
	%
	%\section{???}
	%
	%\end{appendix}
	
	%%%%%%%%%%%%%%%%%%%%%%%%%%%%%%%%%%%%%%%%%%%%%%
	%% Support information (funding), if any,   %%
	%% should be provided in the                %%
	%% Acknowledgements section.                %%
	%%%%%%%%%%%%%%%%%%%%%%%%%%%%%%%%%%%%%%%%%%%%%%
	\section*{Acknowledgements}
	%The authors would like to thank the anonymous referees, an Associate Editor and the Editor for their constructive comments that improved the quality of this paper.
	
	The work of the first and third author was supported, in part, by funding from the National Science Foundation grants NSF/DMS 1916349 and NSF/IIS 1562303, and by the National Institute of Environmental Health Sciences (NIEHS) under grants R01ES030210 and 5R01ES027027.

	%\vspace*{-10pt}

	\bibliography{unique.bib}

\begin{thebibliography}{37}
\providecommand{\natexlab}[1]{#1}
\providecommand{\url}[1]{\texttt{#1}}
\expandafter\ifx\csname urlstyle\endcsname\relax
  \providecommand{\doi}[1]{doi: #1}\else
  \providecommand{\doi}{doi: \begingroup \urlstyle{rm}\Url}\fi

\bibitem[Abramowitz and Stegun(1965)]{AS65}
M.~Abramowitz and A.~Stegun.
\newblock \emph{Handbook of {M}athematical {F}unctions: with {F}ormulas,
  {G}raphs, and {M}athematical {T}ables}.
\newblock Dover, 1965.

\bibitem[Bachoc and Lagnoux(2020)]{BL20}
F.~Bachoc and A.~Lagnoux.
\newblock Fixed-domain asymptotic properties of maximum composite likelihood
  estimators for {G}aussian processes.
\newblock \emph{J. Statist. Plann. Inference}, 209:\penalty0 62--75, 2020.

\bibitem[Banerjee(2017)]{banerjee2017high}
Sudipto Banerjee.
\newblock High-dimensional bayesian geostatistics.
\newblock \emph{Bayesian Analysis}, 12:\penalty0 583--614, 2017.

\bibitem[Baxter(1962)]{Baxter62}
Glen Baxter.
\newblock An asymptotic result for the finite predictor.
\newblock \emph{Math. Scand.}, 10:\penalty0 137--144, 1962.

\bibitem[Datta et~al.(2016{\natexlab{a}})Datta, Banerjee, Finley, and
  Gelfand]{datta16a}
A.~Datta, S.~Banerjee, A.~O. Finley, and A.~E. Gelfand.
\newblock Hierarchical {N}earest-{N}eighbor {G}aussian {P}rocess {M}odels for
  {L}arge {G}eostatistical {D}atasets.
\newblock \emph{Journal of the American Statistical Association}, 111:\penalty0
  800--812, 2016{\natexlab{a}}.
\newblock URL \url{http://dx.doi.org/10.1080/01621459.2015.1044091}.

\bibitem[Datta et~al.(2016{\natexlab{b}})Datta, Banerjee, Finley, Hamm, and
  Schaap]{datta16b}
A.~Datta, S.~Banerjee, A.~O. Finley, N.~A.~S. Hamm, and M.~Schaap.
\newblock {Non-separable Dynamic Nearest-Neighbor Gaussian Process Models for
  Large spatio-temporal Data With an Application to Particulate Matter
  Analysis}.
\newblock \emph{Annals of Applied Statistics}, 10:\penalty0 1286--1316,
  2016{\natexlab{b}}.
\newblock URL \url{http://dx.doi.org/10.1214/16-AOAS931}.

\bibitem[Du et~al.(2009)Du, Zhang, and Mandrekar]{DZM09}
J.~Du, H.~Zhang, and V.~S. Mandrekar.
\newblock Fixed-domain asymptotic properties of tapered maximum likelihood
  estimators.
\newblock \emph{The Annals of Statistics}, 37\penalty0 (6A):\penalty0
  3330--3361, 2009.

\bibitem[Dym and McKean(1970)]{DM70}
H.~Dym and H.~P. McKean.
\newblock Extrapolation and interpolation of stationary {G}aussian processes.
\newblock \emph{Ann. Math. Statist.}, 41:\penalty0 1817--1844, 1970.

\bibitem[Dym and McKean(1976)]{DM76}
H.~Dym and H.~P. McKean.
\newblock \emph{Gaussian processes, function theory, and the inverse spectral
  problem}.
\newblock Academic Press [Harcourt Brace Jovanovich, Publishers], New
  York-London, 1976.
\newblock Probability and Mathematical Statistics, Vol. 31.

\bibitem[Eidsvik et~al.(2014)Eidsvik, Shaby, Reich, Wheeler, and
  Niemi]{eidsvikshaby2014}
Jo~Eidsvik, Benjamin~A. Shaby, Brian~J. Reich, Matthew Wheeler, and Jarad
  Niemi.
\newblock Estimation and prediction in spatial models with block composite
  likelihoods.
\newblock \emph{Journal of Computational and Graphical Statistics}, 23\penalty0
  (2):\penalty0 295--315, 2014.
\newblock \doi{10.1080/10618600.2012.760460}.
\newblock URL \url{https://doi.org/10.1080/10618600.2012.760460}.

\bibitem[Finley et~al.(2019)Finley, Datta, Cook, Morton, Andersen, and
  Banerjee]{finley2019efficient}
Andrew~O Finley, Abhirup Datta, Bruce~C Cook, Douglas~C Morton, Hans~E
  Andersen, and Sudipto Banerjee.
\newblock Efficient algorithms for {B}ayesian {N}earest {N}eighbor {G}aussian
  {P}rocesses.
\newblock \emph{Journal of Computational and Graphical Statistics}, 28\penalty0
  (2):\penalty0 401--414, 2019.

\bibitem[Ginovian(1999)]{Gin99}
M.~S. Ginovian.
\newblock Asymptotic behavior of the prediction error for stationary random
  sequences.
\newblock \emph{Izv. Nats. Akad. Nauk Armenii Mat.}, 34\penalty0 (1):\penalty0
  18--36 (2000), 1999.

\bibitem[Grenander and Szeg\"{o}(1958)]{GS58}
Ulf Grenander and Gabor Szeg\"{o}.
\newblock \emph{Toeplitz forms and their applications}.
\newblock California Monographs in Mathematical Sciences. University of
  California Press, Berkeley-Los Angeles, 1958.

\bibitem[Guinness(2018)]{guinness2018permutation}
Joseph Guinness.
\newblock Permutation and grouping methods for sharpening {G}aussian process
  approximations.
\newblock \emph{Technometrics}, 60\penalty0 (4):\penalty0 415--429, 2018.

\bibitem[Heaton et~al.(2019)Heaton, Datta, Finley, Furrer, Guinness,
  Guhaniyogi, Gerber, Gramacy, Hammerling, Katzfuss, et~al.]{heaton2019case}
Matthew~J Heaton, Abhirup Datta, Andrew~O Finley, Reinhard Furrer, Joseph
  Guinness, Rajarshi Guhaniyogi, Florian Gerber, Robert~B Gramacy, Dorit
  Hammerling, Matthias Katzfuss, et~al.
\newblock A case study competition among methods for analyzing large spatial
  data.
\newblock \emph{Journal of Agricultural, Biological and Environmental
  Statistics}, 24\penalty0 (3):\penalty0 398--425, 2019.

\bibitem[Ibragimov(1964)]{Ibr64}
I.~A. Ibragimov.
\newblock On the asymptotic behavior of the prediction error.
\newblock \emph{Theory of Probability \& Its Applications}, 9\penalty0
  (4):\penalty0 627--634, 1964.

\bibitem[Ibragimov and Rozanov(1978)]{IR78}
Ildar~Abdullovich Ibragimov and Y.~A. Rozanov.
\newblock \emph{Gaussian random processes}, volume~9 of \emph{Applications of
  Mathematics}.
\newblock Springer-Verlag, New York-Berlin, 1978.
\newblock Translated from the Russian by A. B. Aries.

\bibitem[Katzfuss and Guinness(2021)]{katzfuss2021}
Matthias Katzfuss and Joseph Guinness.
\newblock A general framework for vecchia approximations of gaussian processes.
\newblock \emph{Statist. Sci.}, 36\penalty0 (1):\penalty0 124--141, 02 2021.
\newblock \doi{10.1214/19-STS755}.
\newblock URL \url{https://doi.org/10.1214/19-STS755}.

\bibitem[Katzfuss et~al.(2020)Katzfuss, Guinness, Gong, and
  Zilber]{kf2020vecch}
Matthias Katzfuss, Joseph Guinness, Wenlong Gong, and Daniel Zilber.
\newblock {V}ecchia approximations of {G}aussian-process predictions.
\newblock \emph{Journal of Agricultural, Biological and Environmental
  Statistics}, 25\penalty0 (3):\penalty0 383--414, 2020.

\bibitem[Kaufman and Shaby(2013)]{KS13}
C.~G. Kaufman and B.~A. Shaby.
\newblock The role of the range parameter for estimation and prediction in
  geostatistics.
\newblock \emph{Biometrika}, 100\penalty0 (2):\penalty0 473--484, 2013.

\bibitem[Kolmogorov(1941)]{Kol41}
A.~Kolmogorov.
\newblock Interpolation und {E}xtrapolation von station\"{a}ren zuf\"{a}lligen
  {F}olgen.
\newblock \emph{Bull. Acad. Sci. URSS S\'{e}r. Math. [Izvestia Akad. Nauk.
  SSSR]}, 5:\penalty0 3--14, 1941.

\bibitem[Mat\'ern(1986)]{Matern86}
B.~Mat\'ern.
\newblock \emph{Spatial {V}ariation}.
\newblock Springer-Verlag, 1986.

\bibitem[Peruzzi et~al.(2022)Peruzzi, Banerjee, and Finley]{pbf2022}
Michele Peruzzi, Sudipto Banerjee, and Andrew~O. Finley.
\newblock Highly scalable bayesian geostatistical modeling via meshed gaussian
  processes on partitioned domains.
\newblock \emph{Journal of the American Statistical Association}, 117\penalty0
  (538):\penalty0 969--982, 2022.
\newblock \doi{10.1080/01621459.2020.1833889}.
\newblock URL \url{https://doi.org/10.1080/01621459.2020.1833889}.

\bibitem[Shiryaev(1996)]{Sh96}
A.~N. Shiryaev.
\newblock \emph{Probability}, volume~95 of \emph{Graduate Texts in
  Mathematics}.
\newblock Springer-Verlag, New York, second edition, 1996.

\bibitem[Stein(1999{\natexlab{a}})]{Steinbook}
M.~L. Stein.
\newblock \emph{Interpolation of {S}patial {D}ata: {S}ome {T}heory for
  {K}riging}.
\newblock Springer-Verlag, 1999{\natexlab{a}}.

\bibitem[Stein(1990{\natexlab{a}})]{Stein90}
Michael~L. Stein.
\newblock Uniform asymptotic optimality of linear predictions of a random field
  using an incorrect second-order structure.
\newblock \emph{Ann. Statist.}, 18\penalty0 (2):\penalty0 850--872,
  1990{\natexlab{a}}.

\bibitem[Stein(1990{\natexlab{b}})]{Stein90b}
Michael~L. Stein.
\newblock Bounds on the efficiency of linear predictions using an incorrect
  covariance function.
\newblock \emph{Ann. Statist.}, 18\penalty0 (3):\penalty0 1116--1138,
  1990{\natexlab{b}}.

\bibitem[Stein(1999{\natexlab{b}})]{Stein99}
Michael~L. Stein.
\newblock Predicting random fields with increasing dense observations.
\newblock \emph{Ann. Appl. Probab.}, 9\penalty0 (1):\penalty0 242--273,
  1999{\natexlab{b}}.

\bibitem[Stein et~al.(2004)Stein, Chi, and Welty]{stein2004approximating}
Michael~L Stein, Zhiyi Chi, and Leah~J Welty.
\newblock Approximating likelihoods for large spatial data sets.
\newblock \emph{Journal of the Royal Statistical Society: Series B (Statistical
  Methodology)}, 66\penalty0 (2):\penalty0 275--296, 2004.

\bibitem[Sun et~al.(2012)Sun, Li, and Genton]{sun2012geostatistics}
Ying Sun, Bo~Li, and Marc~G Genton.
\newblock Geostatistics for large datasets.
\newblock In \emph{Advances and challenges in space-time modelling of natural
  events}, pages 55--77. Springer, 2012.

\bibitem[Tang et~al.(2021)Tang, Zhang, and Banerjee]{tzb2021}
Wenpin Tang, Lu~Zhang, and Sudipto Banerjee.
\newblock On identifiability and consistency of the nugget in {G}aussian
  spatial process models.
\newblock \emph{J. R. Stat. Soc. Ser. B. Stat. Methodol.}, 83\penalty0
  (5):\penalty0 1044--1070, 2021.

\bibitem[Vecchia(1988)]{ve88}
A.~V. Vecchia.
\newblock Estimation and model identification for continuous spatial processes.
\newblock \emph{Journal of the Royal Statistical society, Series B},
  50:\penalty0 297--312, 1988.

\bibitem[Wang et~al.(2011)Wang, Loh, et~al.]{wang2011fixed}
Daqing Wang, Wei-Liem Loh, et~al.
\newblock On fixed-domain asymptotics and covariance tapering in gaussian
  random field models.
\newblock \emph{Electronic Journal of Statistics}, 5:\penalty0 238--269, 2011.

\bibitem[Zhang(2004)]{Zhang04}
Hao Zhang.
\newblock Inconsistent estimation and asymptotically equal interpolations in
  model-based geostatistics.
\newblock \emph{Journal of the American Statistical Association}, 99\penalty0
  (465):\penalty0 250--261, 2004.

\bibitem[Zhang(2012)]{Zhang12}
Hao Zhang.
\newblock Asymptotics and computation for spatial statistics.
\newblock In \emph{Advances and Challenges in Space-time Modelling of Natural
  Events}, pages 239--252. Springer, 2012.

\bibitem[Zhang and Zimmerman(2005)]{zhang2005towards}
Hao Zhang and Dale~L Zimmerman.
\newblock Towards reconciling two asymptotic frameworks in spatial statistics.
\newblock \emph{Biometrika}, 92\penalty0 (4):\penalty0 921--936, 2005.

\bibitem[Zhang et~al.(2019)Zhang, Datta, and Banerjee]{zdb2019}
Lu~Zhang, Abhirup Datta, and Sudipto Banerjee.
\newblock Practical bayesian modeling and inference for massive spatial
  datasets on modest computing environments.
\newblock \emph{Statistical Analysis and Data Mining: The ASA Data Science
  Journal}, 12\penalty0 (3):\penalty0 197--209, 2019.
\newblock \doi{10.1002/sam.11413}.
\newblock URL \url{https://doi.org/10.1002/sam.11413}.

\end{thebibliography}
	\bibliographystyle{plainnat}

\label{lastpage}

\end{document}